\newtheorem{theorem}{Theorem}[section]
\newtheorem{lemma}{Lemma}[section]
\newtheorem{definition}{Definition}[section]
\newtheorem{corollary}{Corollary}
\title[Existence of supersolutions for nonlocal equations]{Study of the existence of supersolutions for nonlocal equations with gradient terms}
\author[B. Barrios]{Bego\~na Barrios}
	\address{Bego\~na Barrios \hfill\break\indent
		Departamento de An\'{a}lisis Matem\'{a}tico,
		Universidad de La Laguna\hfill 
		\break \indent C/. Astrof\'{\i}sico Francisco S\'{a}nchez s/n, 
		38200 -- La Laguna, SPAIN}
	\email{bbarrios@ull.es}
\author[L. M. Del Pezzo]{Leandro M. Del Pezzo}
	\address{Leandro M. Del Pezzo \hfill\break\indent
		UBA -- CONICET \hfill\break\indent
		Departamento de Matem\'aticas, Facultad de Ciencias Exactas y Naturales \hfill\break\indent
		Universidad de Buenos Aires 
		\hfill\break\indent Pabell\'on I -- Ciudad Universitaria  (C1428BCW)
		\hfill\break\indent Buenos Aires, ARGENTINA. }
	\email{ldpezzo@dm.uba.ar}
	\urladdr{http://cms.dm.uba.ar/Members/ldpezzo/}
\begin{document}

\begin{abstract}
	We study the existence of positive supersolutions of nonlocal equations of type
	$(-\Delta)^s u+ |\nabla u|^q=\lambda f(u)$ posed in exterior domains where the 
	datum $f$ can be comparable with $u^{p}$ near the origin. We prove that 
	the existence of bounded supersolutions depends on the values of $p$, 
	$q$ and $s$.

\smallskip
\noindent \textbf{Keywords.} {Fractional Laplacian and gradient terms and supersolution.}


\end{abstract}

\maketitle

\section{Introduction}

	To establish nonexistence results for positive solutions of nonlinear 
	equations has been object of study by many authors in the last decades.
	{This is largely due, apart from
	its own interest,} by its applications like proving a priori 
	bounds for positive solutions (see for instance \cite{MR619749}) or 
	studying the singularities of such solutions (see \cite{MR2350853}). The 
	first kind of nonexistence results, commonly called Liouville's type 
	results, was obtained by Gidas and Spruck for the seminal equation 
	\[
		-\Delta u= u^{p}\quad\mbox{ in }\mathbb{R}^{N},
	\]
	see \cite{MR615628}. 
	
	Later on, the question of 
	nonexistence of positive solutions for equations that involve  
	differential operators, different than the Laplacian, was 
	{ahead} by several authors, for example, { in } 
	\cite{MR1077263,MR1321809,MR1004713,MR1753094,MR2569325}. 
	Recently, 
	Armstrong and Sirakov, in \cite{MR2905384}, 
	proved a very general Liouville{'s} type theorem for 
	viscosity supersolutions of the equation
	\[
		M u= f(u)\quad\mbox{in }\mathbb{R}^{N}\setminus B_{R_0},
	\]
	where $R_0>0$, $M$ is a general fully nonlinear operator and $f$ is a real  
	positive function in $(0,\infty)$ satisfying 
	\[
		\lim_{t\to 0^{+}}\frac{f(t)}{t^{p^*}}>0,\quad p^*:=\frac{N}{N-2}.
	\] 

	This result{,} in particular{,} implies that the equation $-\Delta u= u^{p}$ 
	does not admit any positive supersolutions if $0<p\leq p^*$ 
	when it is posed, not only in the whole Euclidean space $\mathbb{R}^{N}$, but 
	also in an exterior domain. The result obtained in 
	\cite{MR2905384}  is based on a Hadamar type property of the solutions{,} 
	which requires that the differential operator $M$ is homogeneous. {Then,} it is 
	natural to ask if this kind of nonexistence result can be obtained if  
	the operator is not homogeneous. In fact, this was the main objective of 
	the recent work of Alarc\'on, Garc\'ia-Meli\'an and Quaas 
	\cite{MR3039209} where a general nonexistence result 
	for  {positive} supersolutions of 
	\begin{equation}\label{local}
		-\Delta u+|\nabla u|^q=\lambda f(u)\,\text{ in } 
		\mathbb{R}^N\setminus B_{R_0},\, \lambda>0,
	\end{equation}
	depending of the range of $p$, $q$ and $\lambda$ was proved. 
	We have to mention that other authors addressed this problem  
	but {establishing} results related with the existence or nonexistence of 
	{\it radial} supersolutions {before}
	(see for instance \cite{MR1188490, MR1422294}). We also notice that the presence of the 
	gradient term introduces the possibility of having the existence of supersolutions of 
	\eqref{local} that either are bounded or diverged at infinite because 
	the equation is posed in an exterior domain. To complete the study of 
	\eqref{local}, in the more general framework of 
	fully nonlinear differential operators, Rossi in \cite{MR2358359} proved the nonexistence of 
	supersolutions that do not blow up at infinity of 
	\begin{equation}\label{lin}
		-\Delta u+ b(x)|\nabla u|= c(x) u\quad \text{ in } 
		\mathbb{R}^N\setminus B_{R_0},
	\end{equation}
	when $b(x)$ and $c(x)$ are bounded functions. Finally we also show up 
	that in \cite{MR3046986} the nonexistence of positive supersolutions of 
	\eqref{lin} for general unbounded weights $b(x)$ and $c(x)$,
	was obtained using a completely different approach than in 
	\cite{MR2358359}.

	As far as we know, all the previous results commented above are the most general Liouville's type theorems established for supersolutions for the equation \eqref{local} up to now. 
	Thus, the situation can be summarized in the 
	{ Figure \ref{figuralocal}}. 
	
	\begin{figure}[h]
	\begin{center}
		\begin{tikzpicture}
		    \draw[black!20,fill=red!40,scale=2] 
	          (1.33333,0) -- (1.33333,2)-- (2.5,2) -- (2.5,0)--(1.33333,0);
	        \draw[black!20,fill=red!40,scale=2] 
				(1,0) -- (1,1)-- (1.1,1.22222)-- (1.2,1.5) -- 
				(1.33333,2)-- (1.33333,0)--(1,0);
            \draw[->,scale=2] (0,0) -- (2.5,0) node[right] {$q$};
            \draw[->,scale=2] (0,0) -- (0,2.5) node[left] {$p$};
		    \draw[thick,densely dotted,scale=2](0,1) node[left] {$1$} 
		    -- (1,1);
		    \draw[red, line width=2pt, scale=2] (1,0) node[below] {$1$} -- (1,1) ;
		    \draw[thick,densely dotted,scale=2] (0,2) node[left] 
		       {$\tfrac{N}{N-2}$} -- (1.33333,2) -- (1.33333,0);
		    \draw[scale=2] (0,2) (1.3,0) node[below] 
		       {\quad$\tfrac{N}{N-1}$} ;
		    \draw[red,line width=2pt, scale=2] (1.33333,2)--(2.5,2);
		    \draw[blue,scale=2] (.8,1.5) node {$p=\frac{q}{2-q}$};
		    \draw[blue,line width=2pt,scale=2] plot[domain=1:1.33333] (\x,{\x/(2-\x)});
		    \draw[black,fill=white,scale=2] (1.33333,2) circle (.2ex);
		    \draw[black,fill=red,scale=2] (1,1) circle (.2ex);
		            	             
    	\end{tikzpicture}
	\end{center}
	\label{figuralocal}
	\caption{
		 In the red zone, {the problem} \eqref{local} does 
		not admit positive supersolutions which do not blow up at infinity. 
		On the blue curve a bounded positive supersolution exists
		depending on the value of 
		$\lambda$ (see \cite[Theorems 1,2,3]{MR3039209} and \cite{MR2358359})
		}
    \end{figure}
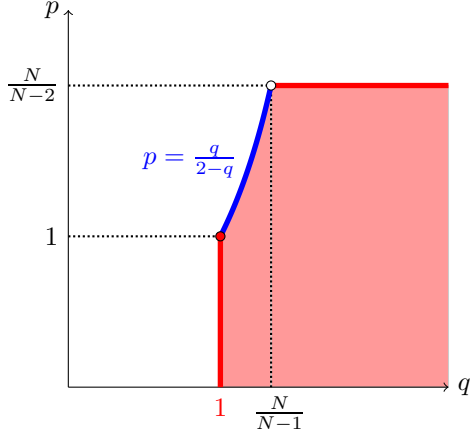
    \medskip 
    
	Motivated by the previous results our objective in the work at hands is to 
	study the Liouville's type result for supersolutions of the nonlocal 
	equation
	\begin{equation}\label{main}
		(-\Delta)^s u+ |\nabla u|^q=\lambda f(u) \text{ in } 
		\mathbb{R}^N\setminus B_{R_0},\tfrac{1}2<s<1,\, {\lambda>0},
	\end{equation}	
	where $(-\Delta)^s$, $0<s<1$, is the well-known 
	\emph{fractional Laplacian}, defined, on smooth functions as
	\begin{equation}\label{operador}
		(-\Delta)^s u(x) \coloneqq C_{N,s}\int_{\mathbb{R}^N} 
		\frac{u(x)-u(y)}{|x-y|^{N+2s}} dy,
	\end{equation}
	with $C_{N,s}$ a normalization constant that is usually omitted for 
	brevity. The integral in \eqref{operador} 
	has to be understood in the principal value sense, that is, as the limit 
	when $\epsilon\to 0$ of the same integral taken in 
	$\mathbb{R}^N\setminus B_\epsilon(x).$ 
	
	It is clear that the 
	fractional Laplacian is well defined for functions that belong, for 
	instance, to $\mathcal{L}_{2s}\cap \mathcal{C}^{2}$ where
	\begin{equation}\label{L2s}
		\mathcal{L}_{2s}\coloneqq\left\{u\colon\mathbb{R}^{N}\to 
		\mathbb{R}\colon 
		\int_{\mathbb{R}^{N}}\frac{|u(x)|}{1+|x|^{N+2s}}<\infty\right\}.
	\end{equation}
	Alternatively, the fractional Laplacian can be written in a more useful way as the following integral 	
	\[
		(-\Delta)^s u(x) \coloneqq C_{N,s}\int_{\mathbb{R}^N} \dfrac{2u(x)-u(x+y)-u(x-y)}{|y|^{N+2s}}dy,
	\]
	that is absolutely convergent for any $u\in \mathcal{L}_{2s}\cap \mathcal{C}^{2}.$ 
	
	Problems with non local diffusion that involve the 
	fractional Laplacian operator and other integro-differential operators 
	have been intensively studied in the last years. {These} nonlocal operators appear when 
	we 
	model different physical situations as anomalous diffusion and 
	quasi-geostrophic flows, turbulence and
	water waves, molecular dynamics and relativistic quantum mechanics of 
	stars (see \cite{MR1081295,MR2196360} and {the references therein}). They also appear 
	in mathematical finance \cite{MR2512800,MR2042661}, elasticity 
	problems \cite{MR0120846}, obstacle problems 
	\cite{MR3861075,MR3783214}, 
	phase transition problems \cite{MR1612250} and crystal dislocation structures,	
	\cite{MR1442163} among others.

	However, our particular interest in the study of \eqref{main} comes not 
	so much for a possible direct application but from the nature of the 
	problem itself. 
	As far as we know, there is not too much 
	known about general nonexistence results for equations that mix local 
	and nonlocal operators like \eqref{main}. We want to comment that other general 
	Liouville's type results for nonlocal elliptic equations can be found in, for instance, 	
	\cite{MR3918222,MR2759038,MR3511811,2014arXiv1401.7402Z}, 
	{but}, as we said, 
	non local equations with gradient terms have not been deeply studied, (see for 
	example, \cite{MR3177345,MR3129851,MR3043588,MR3017289,MR3054309}).
	 
	Finally, we also want to highlight that similar results as those given in \cite{MR1188490} for the 
	corresponding problem \eqref{main} posed in all the space $\mathbb{R}^{N}$, has not been obtained in the 
	nonlocal framework due to the complicate expression of the fractional Laplacian for radial functions (see 
	for instance \cite{MR2917408}) until today. 

	\medskip

	Following some ideas developed in \cite{MR3039209,MR1753094,MR2739791}{,} 
	our objective is to prove the existence, depending on the range of $p$, 
	$q$ and $\lambda$, of positive supersolutions of \eqref{main} where $f$ 
	satisfies
 	\begin{equation}\label{eq:f1}
		\liminf_{t\to0^+} \dfrac{f(t)}{t^p}>0.
	\end{equation}

	As occurs in the local case, the {positive supersolutions} 
	could be 
	bounded or could diverge at infinity due to the fact that the problem 
	is posed in an exterior domain. More precisely, by proving some upper and lower bounds for an auxiliar 	
	function 
	\[
		m(R_1,R_2)\coloneqq\min\left\{u(x)\colon x\in A(R_1,R_2),\, 
		0<R_1<R_2\right\},
	\]
	where $A(R_1,R_2)\coloneqq\left\{x\colon R_1\le |x|\le R_2\right\}$ (see 
	Section \ref{nonexist}), we are able to get some relevant nonexistence results 
	for {positive} supersolutions 
	{as the following results establish.}        

	\begin{theorem}\label{theorm:subcritical1}
		    Let $N>2s>1.$ Assume that $f\colon(0,\infty)\to{ (0,\infty)}$ is a 
		    continuous function
		    verifying \eqref{eq:f1}. 
		    If 
			   $$1<q<\displaystyle\frac{N}{N+1-2s} \mbox{ 
			    and } 0<p<\dfrac{(2s-1)q}{2s-q},$$
			    or 
			    $$ q\ge\displaystyle\frac{N}{N+1-2s} 
			   \mbox{ and } 0<p<\displaystyle\frac{N}{N-2s},$$ 
		    then there are no positive supersolutions to \eqref{main}
	which do not blow up at infinity.
		\end{theorem}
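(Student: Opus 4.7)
I would argue by contradiction. Suppose $u$ is a positive, bounded supersolution of \eqref{main} on $\mathbb{R}^N\setminus B_{R_0}$. Since $u$ is bounded and \eqref{eq:f1} holds, the hypothesis can be rewritten as
$(-\Delta)^s u + |\nabla u|^q \ge c\,u^p$ for some $c>0$ and all sufficiently large $|x|$. The whole analysis then centers on matching upper and lower bounds for the auxiliary quantity $m(R_1,R_2)$.

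First I would establish a \emph{lower bound of nonlocal type}: $m(R,2R)\ge c\, R^{-(N-2s)}$ for all large $R$. This uses only that $u$ is a positive supersolution of a nonlocal equation with nonnegative right-hand side, and is obtained by comparing $u$ outside a large ball with a radially decreasing fractional barrier, i.e.\ the nonlocal analogue of $|x|^{-(N-2s)}$. A standard fractional weak Harnack / comparison argument then transfers this bound to the annular minimum.

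Next I would derive an \emph{upper bound} for $m(R,2R)$ by evaluating the equation at a minimizing point $x_R\in A(R,2R)$. Two ingredients enter:
\begin{enumerate}[label=(\roman*)]
\item a gradient estimate, bounding $|\nabla u(x_R)|$ in terms of differences of minima on slightly larger shells, or via an interior gradient estimate for the equation on a ball of radius of order $R$;
\item a nonlocal estimate, splitting the principal-value for $(-\Delta)^s u(x_R)$ into the contribution from $A(R,2R)$ (nonpositive, discarded), from the inner ball (controlled by $\|u\|_\infty R^{-2s}$), and from the far field (estimated via the lower bound of the previous step).
\end{enumerate}
Combining them produces a schematic inequality
\[
c\, m(R,2R)^p \;\le\; C\bigl(m(R,2R)/R\bigr)^q + C\,R^{-2s}, \qquad R \text{ large},
\]
which, together with $m(R,2R)\ge c R^{-(N-2s)}$, becomes impossible in each of the two $(p,q)$-regimes of the statement. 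The two thresholds $q=N/(N+1-2s)$ and $p=(2s-1)q/(2s-q)$ arise respectively from balancing the gradient against the nonlocal term, and balancing the gradient against the source, along the natural fractional decay profile $|x|^{-(N-2s)}$.

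The main obstacle, relative to the local case treated in \cite{MR3039209}, is the handling of the nonlocal term at $x_R$: in contrast with the Laplacian, $(-\Delta)^s u(x_R)$ is \emph{not} automatically nonpositive at an interior minimum on the annulus, since values of $u$ outside $A(R,2R)$ contribute to the principal-value integral. Bounding the inner and far-field contributions sharply enough to recover the displayed inequality — and then reconciling the two distinct dominance regimes (nonlocal-dominated vs.\ gradient-dominated) that give rise to the two separate cases in the statement — is where the delicate work will lie.
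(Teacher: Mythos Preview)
Your overall contradiction strategy --- match a lower bound for the annular minimum against an upper bound forced by the equation --- is exactly the paper's, but the lower bound step as you describe it does not work, and this is the essential gap.

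You propose to compare $u$ with the fractional fundamental solution profile $|x|^{-(N-2s)}$ and read off $m(R,2R)\ge cR^{-(N-2s)}$. The comparison principle you need (the paper's Theorem~\ref{theorem:cp}) requires the barrier to be a \emph{subsolution} of $(-\Delta)^s w+|\nabla w|^q=0$. But $(-\Delta)^s|x|^{-(N-2s)}=0$ away from the origin, so after adding the strictly positive term $|\nabla(|x|^{-(N-2s)})|^q$ the profile is a supersolution, not a subsolution; the comparison goes the wrong way. There is also no way to salvage this by forgetting the gradient and comparing only against $(-\Delta)^s$, since from $(-\Delta)^s u+|\nabla u|^q\ge0$ one cannot conclude $(-\Delta)^s u\ge0$.

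What the paper does instead is to build barriers $|x|^{-\sigma}$ with $\sigma>N-2s$ (so that $(-\Delta)^s|x|^{-\sigma}=\gamma_\sigma|x|^{-\sigma-2s}$ with $\gamma_\sigma<0$) and then choose $\sigma$ large enough that this negative term dominates $|\nabla|x|^{-\sigma}|^q$; this forces $\sigma\ge(2s-q)/(q-1)$ when $q>\tfrac{N+2s}{N+1}$, and $\sigma>N$ when $q\le\tfrac{N+2s}{N+1}$ (Lemmas~\ref{lemma:subsolution1}, \ref{lemma:subsolution2}, \ref{claims}). The resulting lower bound $m(0,R)\ge AR^{-\sigma}$ is therefore $q$-dependent and, for small $q$, much weaker than $R^{-(N-2s)}$. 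It is precisely this $q$-dependence of the admissible $\sigma$, matched against the iterated upper bound of Lemma~\ref{lemma:aux2} and Lemma~\ref{lemma:AGMQ3}, that produces the threshold $p=\tfrac{(2s-1)q}{2s-q}$ in the first regime; the profile $|x|^{-(N-2s)}$ plays no role there. Your upper bound sketch is closer in spirit to the paper's cut-off argument, though note that the paper keeps the factor $m(0,R)$ in front of the $R^{-2s}$ term (via the test function $u-m\,\xi(|x|/R)$), which is what makes the iteration in Lemma~\ref{lemma:AGMQ3} go through.
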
			
	
	\begin{theorem}
		\label{theorem:cc}
		 Let $N>2s>1.$ Assume that {$f\colon(0,\infty)\to(0,\infty)$} is a 
		 continuous {nondecreasing} function
		 verifying \eqref{eq:f1}.  If $p=\tfrac{N}{N-2s}$, and 
		$q>\tfrac{N}{N+1-2s}$ then there are no positive 
		supersolution of \eqref{main}
		which do not blow up at infinity.
	\end{theorem}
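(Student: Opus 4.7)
The plan is to argue by contradiction and to reduce the critical case $p=\tfrac{N}{N-2s}$ to a logarithmic iteration built on top of the machinery developed for $m(R_1,R_2)$ in Section \ref{nonexist}. Assume that a positive supersolution $u$ of \eqref{main} exists which does not blow up at infinity, so $u\le M$ on $\mathbb{R}^N\setminus B_{R_0}$ for some constant $M>0$. Using \eqref{eq:f1} and the continuity of $f$, fix $c_0,\delta>0$ with $f(t)\ge c_0\, t^p$ for all $t\in(0,\delta]$; sending $R_0$ to infinity using the decay bounds of Section \ref{nonexist} one may assume $u(x)\le\delta$ on $\mathbb{R}^N\setminus B_{R_0}$, so that
\[
(-\Delta)^s u + |\nabla u|^q \ \ge\ \lambda c_0\, u^p \quad\text{on } \mathbb{R}^N\setminus B_{R_0}.
\]

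Next, I would import from Section \ref{nonexist} the baseline estimate
\[
m(R,2R)\ \ge\ \frac{C_0}{R^{N-2s}},\qquad R\ge R_0,
\]
which is the exact decay produced by comparison with the Riesz kernel of $(-\Delta)^s$ and which already suffices to conclude in the strictly subcritical regime of Theorem \ref{theorm:subcritical1}. At the critical exponent the comparison just fails: substituting gives $u^p\ge c\,R^{-N}$ on $A(R,2R)$, which sits precisely at the borderline of integrability against the fractional kernel. The role of the assumption $q>\tfrac{N}{N+1-2s}$ is to render the gradient term harmless in this regime, because a function decaying like $|x|^{-(N-2s)}$ has gradient of order $|x|^{-(N-2s+1)}$, whence $|\nabla u|^q$ decays strictly faster than $R^{-N}$ and the term $\lambda c_0\, u^p$ remains the leading contribution throughout the iteration.

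The heart of the argument is a logarithmic bootstrap. Plugging the baseline bound into the inequality above and testing against a barrier adapted from the proof of Theorem \ref{theorm:subcritical1}, now carrying a logarithmic tail, yields an improvement
\[
m(R,2R)\ \ge\ \frac{C_1\,\log R}{R^{N-2s}}
\]
for all large $R$. Using that $f$ is nondecreasing to pass each new lower bound through the nonlinearity, iterating the step $k$ times produces
\[
m(R,2R)\ \ge\ \frac{C_k\,(\log R)^k}{R^{N-2s}},\qquad k\in\mathbb{N}.
\]
Combined with the integrability obstructions encoded in the bounds of Section \ref{nonexist}, this sequence is incompatible with $u\le M$, delivering the desired contradiction.

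The main obstacle is the nonlocal iteration step. In the local analogue of \cite{MR3039209}, an annular lower bound on $u$ translates straightforwardly into a pointwise lower bound on $-\Delta u$ via radial averages, whereas $(-\Delta)^s u(x)$ in \eqref{operador} couples the values of $u$ at every scale simultaneously. I expect to handle this by splitting the principal-value integral as a sum over dyadic annuli and bounding each piece by the corresponding $m(R_1,R_2)$-quantity, in the spirit of the estimates already used in Section \ref{nonexist}. A secondary, technically subtle, point is absorbing the gradient term at each step of the iteration; this will require a quantitative interpolation bound for $|\nabla u|$ in terms of $u$ and $(-\Delta)^s u$ on annuli, whose admissible exponent range is dictated precisely by the strict inequality $q>\tfrac{N}{N+1-2s}$.
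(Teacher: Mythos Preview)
Your proposal captures the right intuition---a logarithmic correction to the baseline decay $|x|^{-(N-2s)}$ is indeed the mechanism at the critical exponent---but the contradiction you reach is not a contradiction, and the route you propose is both more complicated and more fragile than what the paper does.

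First, the contradiction. Having $m(R,2R)\ge C_k(\log R)^k R^{-(N-2s)}$ for each fixed $k$ is perfectly compatible with $u\le M$, since every such lower bound still tends to $0$ as $R\to\infty$. To extract a genuine contradiction from a log-iteration you would need uniform control on the constants $C_k$ and a summation argument, none of which you address. The paper avoids this entirely: it first proves the \emph{upper} bound $m(0,R)\le CR^{-(N-2s)}$ via Lemma~\ref{lemma:aux5} and Lemma~\ref{lemma:AGMQ4}\,(b) (this is where the doubling estimate of Lemma~\ref{lemma:aux4}, specific to $q>\tfrac{N}{N+1-2s}$, enters), and then a \emph{single} logarithmic lower bound $u(x)\ge C\log(1+|x|)\,|x|^{-(N-2s)}$ already contradicts it. You never invoke this upper bound, and without it your argument does not close.

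Second, the logarithmic step. Rather than an inductive bootstrap with unspecified barriers, the paper uses the explicit comparison function $\Gamma(x)=\log(1+|x|)\,|x|^{-(N-2s)}$, for which the estimate $(-\Delta)^s\Gamma(x)\le C|x|^{-N}$ is available from \cite{MR2739791}. The gradient term $|\nabla\Gamma|^q$ is bounded directly (not via any interpolation between $u$ and $(-\Delta)^s u$), and the condition $q>\tfrac{N}{N+1-2s}$ is exactly what makes $|\nabla\Gamma|^q$ decay like $|x|^{-N}$ as well. One application of the comparison principle (Theorem~\ref{theorem:cpp}), together with the pointwise baseline $u(x)\ge C|x|^{-(N-2s)}$ from Lemma~\ref{lemma:aux6} (which uses that $f$ is nondecreasing), yields the lower bound in one shot. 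Your proposed dyadic splitting of the principal-value integral and gradient interpolation are unnecessary and would be substantially harder to carry out.
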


	That is, we obtain the equivalent result as 
	\cite[Theorem 1, Theorem 3]{MR3039209} in the nonlocal framework 
	(i.e. compare the red zones in { Figures \ref{figuralocal} and \ref{figura}}).       
       
	In relation to the existence of supersolutions that do not diverge 
    at infinity, doing a carefully analysis we obtain the { following.} 
    	\begin{theorem}\label{theorem:cc1}
        Let $N>2s>1.$ Then there exists a positive bounded supersolution of 
         		\begin{equation}
	        		\label{eq:cc1}
	         		(-\Delta)^s u + |\nabla u|^q=\lambda u^p,\quad \mbox{in $\mathbb{R}^{N}\setminus B_{R_0}$, $\lambda>0,$}
         		\end{equation}
		if one of the following cases hold
        \renewcommand{\theenumi}{\roman{enumi}}%
        \begin{enumerate}[topsep=8pt,itemsep=4pt,partopsep=4pt, parsep=4pt]
       		\item $p=\displaystyle\frac{N}{N-2s}$ and $0<q<1;$
       		\item $p=\displaystyle\frac{N}{N-2s},$ $q=1$ and $R_0>\tilde{R}_0$ for some $
       		\tilde{R}_{0}$ big enough;
       		\item $p=\displaystyle\frac{N}{N-2s},$ $1<q\le\displaystyle\frac{N}{N+1-2s}$ and $
       		\lambda\in(0,\lambda_0)$ for some 
       		$\lambda_0>0;$
       		\item $\displaystyle\frac{2s-1}{2s-q}q\le p<\displaystyle\frac{N}{N-2s},$ 
       		$\displaystyle\frac{N+2s}{N+1}<q<\displaystyle\frac{N}{N+1-2s},$ and 
       		$\lambda\in(0,\lambda_0)$ for some 
       		$\lambda_0>0;$
       		\item $\displaystyle\frac{N+2s}{N+2s-q}q\le p<\displaystyle\frac{N}{N-2s},$ 
       		$0<q<\displaystyle\frac{N+2s}{N+1},$ $q\neq1$ and 
       		$\lambda\in(0,\lambda_0)$ for some 
       		$\lambda_0>0.$ In the case of $q=1$, $R_0$ should be bigger 
       		than $\tilde{R}_0$ for some $\tilde{R}_{0}$ big enough;
       		\item $\displaystyle\frac{N+2s}{N}< p<\displaystyle\frac{N}{N-2s},$ 
       		$q=\displaystyle\frac{N+2s}{N+1},$ and 
       		$\lambda\in(0,\lambda_0)$ for some 
       		$\lambda_0>0.$
		\end{enumerate}
         
    \end{theorem}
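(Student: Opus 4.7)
The strategy is constructive: for each of the six cases I will exhibit an explicit positive bounded supersolution built from power-type radial functions. The basic building block is $w_\alpha(x)=(1+|x|^2)^{-\alpha/2}$ with $\alpha>0$ to be chosen, possibly rescaled by a small multiplicative constant or augmented by an additive constant. Such $w_\alpha$ is smooth on $\mathbb{R}^N$, bounded, strictly positive, and satisfies $w_\alpha(x)\sim|x|^{-\alpha}$, $|\nabla w_\alpha(x)|\sim\alpha|x|^{-\alpha-1}$, and $w_\alpha(x)^p\sim|x|^{-\alpha p}$ for $|x|$ large; in addition, one has the key asymptotic $(-\Delta)^s w_\alpha(x)=\Lambda(\alpha,s,N)\,|x|^{-\alpha-2s}(1+o(1))$ as $|x|\to\infty$, with $\Lambda>0$ for $\alpha$ in a suitable range. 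This matches, up to lower-order remainders, the exact formula known for $(-\Delta)^s|x|^{-\alpha}$ in terms of Gamma functions.

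With these asymptotics in hand, the case analysis reduces to exponent matching among $\alpha+2s$, $(\alpha+1)q$ and $\alpha p$. For the critical cases (i)--(iii), where $p=N/(N-2s)$, I will take $\alpha=N-2s$ so that $\alpha+2s=\alpha p=N$, and hence $(-\Delta)^s w_\alpha$ and the right-hand side share the same decay. When $0<q<1$ (case (i)) one has $(N-2s+1)q<N$, so the gradient term dominates $\lambda w_\alpha^p$ at infinity and the supersolution inequality holds for every $\lambda>0$ after a rescaling of $w_\alpha$. When $q=1$ (case (ii)) one still has $N-2s+1<N$ because $2s>1$, but the gain factor $|x|^{2s-1}$ only beats the remaining $O(1)$ constants once $|x|$ is large, whence the condition $R_0>\tilde{R}_0$. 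For $1<q\le N/(N+1-2s)$ (case (iii)) the gradient exponent is at most $N$ and matches the right-hand side, and smallness of $\lambda$ closes the inequality. In the subcritical cases (iv)--(vi), with $p<N/(N-2s)$, the balance shifts to the gradient term: I will choose $\alpha=q/(p-q)$ so that $(\alpha+1)q=\alpha p$, and the stated lower bounds on $p$ are algebraically equivalent to $\alpha+2s\ge\alpha p$, which renders the fractional-Laplacian contribution subdominant, so that smallness of $\lambda$ (or, for $q=1$, largeness of $R_0$) suffices. In cases (v) and (vi) a two-term ansatz of the form $u=aw_\alpha+bw_\beta$ with an auxiliary exponent $\beta$ may be needed when a single power cannot simultaneously control both the nonlocal term and the right-hand side.

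The main obstacle is the sharp pointwise asymptotic for $(-\Delta)^s w_\alpha$ together with the positivity of the leading constant $\Lambda(\alpha,s,N)$, since this sign determines whether the construction actually yields a supersolution. The plan is to split the principal-value integral defining $(-\Delta)^s w_\alpha(x)$ into a near-field part on $\{|y-x|\le|x|/2\}$, where a second-order Taylor expansion of $w_\alpha$ reproduces the leading power $|x|^{-\alpha-2s}$, and a far-field part whose contributions are of lower order; alternatively, one could rely on the Fourier representation of $(1+|x|^2)^{-\alpha/2}$ via Bessel functions. Once these estimates are available, each sub-case reduces to an elementary exponent inequality and to a tuning of the multiplicative constant, the threshold $\lambda_0$, and the radius $\tilde{R}_0$.
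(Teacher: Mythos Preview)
Your overall plan—construct radial power-type profiles and match decay exponents—is exactly the paper's approach, and your exponent bookkeeping for cases (i)--(iv) is essentially correct. There is, however, a genuine gap in your asymptotic for $(-\Delta)^s w_\alpha$, and it breaks case (v).

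You assert $(-\Delta)^s w_\alpha(x)=\Lambda(\alpha)|x|^{-\alpha-2s}(1+o(1))$ with $\Lambda>0$. This is only valid for $0<\alpha<N$; for $\alpha>N$ the decay \emph{saturates} at $|x|^{-N-2s}$ (with a logarithmic correction at $\alpha=N$), and this is sharp from both sides—see the paper's Lemma~\ref{lema.BV}. In case (v) one has $0<q<\tfrac{N+2s}{N+1}$, and your choice $\alpha=q/(p-q)$ equals $\tfrac{N+2s-q}{q}>N$ at the endpoint $p=\tfrac{(N+2s)q}{N+2s-q}$. Once the saturated decay $|x|^{-N-2s}$ is used, the correct balance is $N+2s\ge(\alpha+1)q=\alpha p$, not your $\alpha+2s\ge\alpha p$; the latter is equivalent to $p\ge\tfrac{(2s-1)q}{2s-q}$, which is the threshold of case (iv), not (v). So your claimed algebraic equivalence fails precisely where you suspect trouble. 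The paper avoids this by choosing $\sigma=\tfrac{N+2s-q}{q}>N$ so that $(\sigma+1)q=N+2s$ matches the saturated decay exactly, and the stated lower bound on $p$ then appears as $p\ge\tfrac{\sigma+1}{\sigma}q$. No two-term ansatz is needed.

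A second, smaller issue: in cases (i)--(iii) you take $\alpha=N-2s$, the fundamental-solution exponent, for which the leading constant $\Lambda$ is \emph{zero}, not positive. Hence ``positivity of $\Lambda$'' cannot be the mechanism. What actually drives the argument (and what the paper uses) is merely the one-sided bound $(-\Delta)^s\psi(x)\ge -C|x|^{-N}$ from Lemma~\ref{lema.BV}, together with the fact that $|\nabla\psi|^q$ decays strictly slower and therefore dominates both this possible negative contribution and $\lambda\psi^p$. The paper combines this with a dilation rescaling $\psi(x)=\varphi(Rx/R_0)$ (and, in cases (iii)--(vi), an additional multiplicative factor $A$), which is how case (i) is obtained for every $\lambda>0$ and case (ii) for $R_0$ large.
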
    

	We show up that to the contrary of what happens in the local case, radial reductions 
	are not useful to study our equation (see for instance \cite{MR3541501} 
	and references therein). Moreover, the function $m$ does not 
	satisfy $$m(R_1,R_2)=\min\{m(R_1), m(R_2)\},$$ where { $m(R)=min\{u(x)\colon {|x|=R}\}$} 
	is, in the local case, a monotone function for $R>R_1$, (see \cite[Lemma 1]{MR3039209}). 
	This and the fact that it is really complicate to work with the
	explicit value of general radial functions of the nonlocal operator, 
	force us to study different cases specially when $p$ is critical.  For that, optimal estimates for the fractional Laplacian of some 
	particular radial functions will be needed (see Lemmas \ref{lema.BV}, 
	\ref{lemma:subsolution2}, \ref{lemma:aux4} and the 
	proof of Theorem \ref{theorem:cc}). Due to the complications 
	that the nonlocal operator introduced, we left as an open problem the existence 
	of bounded supersolutions in the range 
	$$
		\frac{2s-1}{2s-q}q\le p<\frac{N}{N+2s-q}q,\quad 0<q\le\frac{N+2s}{N+1},
	$$
	{because} in this case it is not clear how can we find a suitable function $\psi$ such that 
	$|(-\Delta)^s \psi|$ decays faster than $|\nabla \psi|^q$ at infinity 
	(see {\it Cases 4} and {\it 5} in the proof of Theorem \ref{theorem:cc1} and Figure \ref{figura}).

	We notice that the essential feature which continuously appears along the main proofs of our work is 
	a comparison principle that in the local framework was proved, in, 
	for instance, \cite[Theorem 10.1]{MR737190}.   
	
	Finally we want also to mention that, in the supercritical case, that is when $N>2s$ and 
    $p>\tfrac{N}{N-2s}$, Felmer and Quaas in \cite[proof of Theorem 1.3]{MR2739791} 
    have recently shown that, for any 
    $\beta\in\left(\displaystyle\frac1{p-1},\displaystyle\frac{{N-2s}}{2s}\right)$ 
    there exists $C>0$ such that $$v_{\beta}(x)\coloneqq C(1+|x|)^{-2s\beta},$$
     is a bounded supersolution of $(-\Delta)^s u= u^p \text{ in }\mathbb{R}^N.$	 
     Therefore, trivially, $v_\beta$ is also 
    a supersolution of \eqref{main} when $\lambda=1$ and 
    $f(u)=u^p$ for all $q\ge0.$ So that, we can also formulate the next
			
	\begin{theorem}\label{theorm:HolesSuperCC}
		 Let $N>2s,$ $p>\tfrac{N}{N-2s}$ and $q\ge0.$ Then  there exists a classical positive 
		 and bounded supersolution of $(-\Delta)^s u+|\nabla u|^q = \lambda u^p$ in $\mathbb{R}^N\setminus B_{R_0}$, for $\lambda\leq 1$.
	\end{theorem}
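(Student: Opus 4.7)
The plan is to invoke the supersolution $v_\beta$ provided by Felmer and Quaas almost directly, since the gradient term cooperates with us (it is nonnegative and sits on the left-hand side). I would proceed as follows.

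First, I would fix $\beta \in \left(\tfrac{1}{p-1}, \tfrac{N-2s}{2s}\right)$, which is a nonempty interval precisely because $p > \tfrac{N}{N-2s}$, and take the function
\[
v_{\beta}(x) := C(1+|x|)^{-2s\beta}
\]
with $C>0$ chosen as in \cite[proof of Theorem 1.3]{MR2739791}, so that
\[
(-\Delta)^{s} v_{\beta}(x) \geq v_{\beta}(x)^{p} \qquad \text{in } \mathbb{R}^{N}.
\]
Before using it in our equation I would check the basic admissibility: $v_{\beta}$ is smooth, strictly positive, bounded by $C$, radially nonincreasing, and trivially belongs to $\mathcal{L}_{2s}\cap C^{2}(\mathbb{R}^{N})$, so the fractional Laplacian is defined pointwise in the classical sense from \eqref{operador}, and $\nabla v_{\beta}$ is a continuous bounded vector field whose modulus raised to any $q\geq 0$ yields a continuous nonnegative function on $\mathbb{R}^{N}$.

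Next, the actual computation is a one-line comparison. Since $|\nabla v_{\beta}(x)|^{q}\geq 0$ everywhere and $\lambda \leq 1$, I write
\[
(-\Delta)^{s} v_{\beta}(x) + |\nabla v_{\beta}(x)|^{q} \; \geq \; (-\Delta)^{s} v_{\beta}(x) \; \geq \; v_{\beta}(x)^{p} \; \geq \; \lambda\, v_{\beta}(x)^{p}
\]
for every $x \in \mathbb{R}^{N}$, and in particular for every $x\in \mathbb{R}^{N}\setminus B_{R_0}$. Thus $v_{\beta}$ is a classical positive bounded supersolution of $(-\Delta)^{s} u + |\nabla u|^{q} = \lambda u^{p}$ in $\mathbb{R}^{N}\setminus B_{R_0}$, as required.

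Honestly, there is no real obstacle here: the entire nontrivial analytic work is hidden inside the Felmer--Quaas estimate on $(-\Delta)^{s} v_{\beta}$, which we are allowed to quote. The only thing one must keep an eye on is that the gradient contribution enters with the \emph{right sign} (it is added on the same side as the fractional Laplacian), so we do not need any upper bound on $|\nabla v_{\beta}|^{q}$ — a stark contrast with the delicate estimates required in Theorem \ref{theorem:cc1}, where the gradient term must be controlled from above by $|(-\Delta)^{s}\psi|$ plus the right-hand side.
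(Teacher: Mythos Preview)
Your proof is correct and follows exactly the approach of the paper: the authors simply quote the Felmer--Quaas supersolution $v_\beta$ of $(-\Delta)^s u=u^p$ and observe, in the paragraph immediately preceding the statement, that adding the nonnegative term $|\nabla v_\beta|^q$ on the left trivially preserves the supersolution inequality for $\lambda\le 1$. The only cosmetic slip is that $v_\beta(x)=C(1+|x|)^{-2s\beta}$ is not $C^2$ at the origin, but this is irrelevant since Definition~\ref{clasica} only requires $C^2$ regularity in $\mathbb{R}^N\setminus B_{R_0}$.
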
      
		
	The conclusion obtained in the Theorems  \ref{theorm:subcritical1}, \ref{theorem:cc}, \ref{theorem:cc1} 
	and \ref{theorm:HolesSuperCC} for $f(u)=\lambda u^p$, $\lambda\leq 1$, can be summarized in the following graphic 
	where 
	$$
		q_1=\frac{N+2s}{N+1}\quad\mbox{ and }\quad q_2=\frac{N}{N+1-2s}.
	$$
	\begin{figure}[h]
		\begin{center}
	            \begin{tikzpicture}
	             \draw[black!20,fill=purple!30,scale=3] 
	                 	(0,1.6) -- (0,2.2)
	                 	-- (2,2.2) -- (2,1.6);
	                 \draw[black!20,fill=red!40,scale=3] 
	                 	(1.14285714286,0) -- (1.14285714286,1.6)
	                 	-- (2,1.6) -- (2,0)--(1.14285714286,0);
	                  \draw[black!20,fill=red!40,scale=3] (1,0) -- 
	                  	(1,1)-- (1.1,1.375)-- (1.14,1.58333333333) 
	                  	-- (1.14285714286,1.6)-- (1.14285714286,0)
	                  	--(1,0);
	                   \draw[black!20,fill=blue!50,scale=3] 
	                 	(0,1.6)--(1.14285714286,1.6)-- (1.1,1.375) -- (1,11/9) -- (3/4,33/38)
	                 	-- (1/2,11/20)--(0.25,0.2619047619) -- 
	                 	(0,0)-- (0,1.6); 
                    \draw[->,scale=3] 
                    	(0,0) -- (2,0) node[right] {$q$};
                    \draw[->,scale=3] 
                    	(0,0) -- (0,2.2) node[left] {$p$};
		             \draw[scale=3](1,0) node[below] {$1$};
		            \draw[red,dashed, scale=3] 
		            	(1,0)  -- (1,1) ;
		            \draw[thick,densely dotted,scale=3] 
		            	(1,1) -- (1,11/9) ;
		            \draw[thick,line width=2pt,color=darkgreen,scale=3] 
		            	(1,11/9) -- (1,1.6) ;
		           \draw[thick,densely dotted,scale=3] 
		           		(0,1.6) node[left] {$\tfrac{N}{N-2s}$} 
		           		-- (1.14285714286,1.6) -- (1.14285714286,0) ;
		           \draw[thick,densely dotted,scale=3] 
		           		(0,1.375) node[left] {$1+\tfrac{2s}{N}$} 
		           		-- (1.1,1.375) -- (1.1,0) node[below] 
		             {$q_1$}  ;
		           \draw[thick,densely dotted,scale=3] 
		           		(0,11/9) node[left] {$1+\tfrac{1}{N+2s-1}$} 
		           		-- (1,11/9);
		           \draw[scale=3]  (1.2,0) node[below] 
		             {$q_2$} ;
		           \draw[red!40,line width=2pt,scale=3] 
		           		(1.14285714286,1.6)--(2,1.6);
		            \draw[blue!50,line width=2pt,scale=3] 
		            	plot[domain=1.1:1.14285714286] (\x,{\x/(3-2*\x)});
		            \draw[blue,dashed,scale=3] 
		            	plot[domain=0:1.1] (\x,{\x/(3-2*\x)});

		             \draw[blue,scale=3] 
		            	plot[domain=0:1.1] ((\x,{(11)/(11-2*\x)*\x});
		            \draw[blue,very thick,line width=2pt,scale=3] 
		            	(1,1.6)--(1.14285714286,1.6);
		            \draw[purple!30,very thick,line width=2pt,scale=3] 
		            	(0,1.6)--(1,1.6);
		            
		          	\draw[black,fill=darkgreen,scale=3] 
		             	(1,1.6) circle (.1ex);
		             \draw[black,fill=darkgreen,scale=3] 
		             	(1,11/9) circle (.1ex);
		             \draw[black,fill=blue,scale=3] 
		             	(1.14285714286,1.6) circle (.1ex);
		            
		             \draw[black,fill=white,scale=3] 
		             	(1.1,1.375) circle (.1ex);
		             \draw[blue] (2.2,1) node {$p=\tfrac{2s-1}{2s-q}q$};
                \end{tikzpicture}
                \caption{	
                	In the red zone there is not any positive supersolution which 
                	do not blow up at infinite. In the pink one 
                	(supercitical case $p>\tfrac{N}{N-2s}$ 
					and critical $p=\tfrac{N}{N-2s}$, with $0<q<1$)	
					no extra restrictions on $\lambda$ or $R_0$ 
					to prove the existence of bounded supersolutions is needed. 
					The blue region corresponds to the cases in which the existence of bounded 
					supersolutions depends on some upper bound on the parameter $\lambda$ 
					(see cases {\it iii)-vi)} of Theorem \ref{theorem:cc1}). 
					Finally, the green { line} shows the situation obtained when $q=1$ 
					where the parameter $R_0$ has to be big enough.
					}
                 \label{figura}
            \end{center}

       \end{figure}
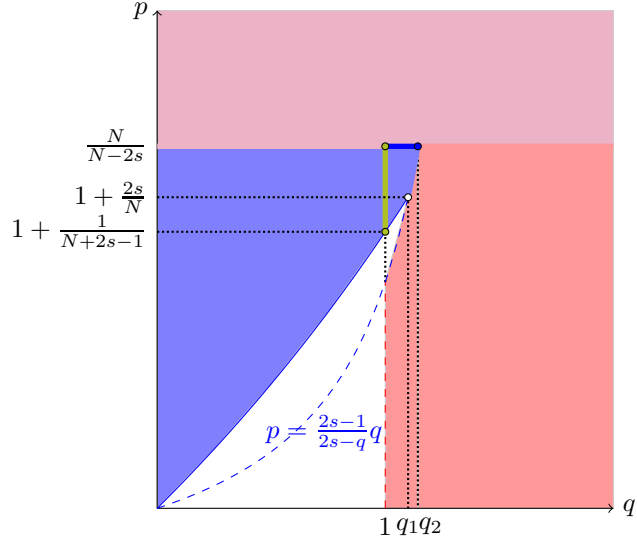
	

\section{Some preliminaries}\label{SP}

    In this section, we gather some preliminary properties and 
    definitions which will 
    be useful in the forthcoming ones. 
    
    \subsection{Classical solution}

	\begin{definition}\label{clasica}
		Let $s\in\left(\tfrac{1}{2},1\right),$ $q,p,R_0>0,$  
    	$B_{R_0}$ be the $N-$ball with radius $R_0$ and center $0,$ and 
    	$f\colon \mathbb{R} \to\mathbb{R}$ be a function. 
    	We say that
    	$u\in \mathcal{C}^2(\mathbb{R}^N\setminus B_{R_0})\cap\mathcal{L}_{2s}$ is 
    	a {classical supersolution} 
    	(resp. {subsolution}) 
    	of \eqref{main} if
    	\[
    		(-\Delta)^s u(x)+ |\nabla u(x)|^q
    		\ge (\text{resp.}\le) \, f(u) \text{ in } 
    		\mathbb{R}^N\setminus B_{R_0},
    	\]
		where $\mathcal{L}_{2s}$ was given in \eqref{L2s}.	
		We also call $u$ a {classical solution} of \eqref{main} 
		if it is both a sub- and supersolution.
	\end{definition}
	
    	
    %
	%
	Throughout this article we will be always dealing with positive classical supersolutions. 
	However, as we commented in the introduction, 
	we will not make any assumption about if these supersolutions are 
	bounded or not. In fact, we will distinguish between those that are 
	bounded and others that not.

    \subsection{Particular subsolutions}
    	{We show up now some results regarding with the existence of 
    	subsolutions when the datum $f$ is equal to zero. Before that we give 
    	a useful pointwise inequality that will be also the main tool to 
    	prove the existence of bounded supersolutions of \eqref{main} (see Section \ref{exsol}). The proof of it can be found in \cite[Lemma 2.1]{MR3122168}.
		
		\begin{lemma}\label{lema.BV}
			Let $\varphi\in \mathcal{C}^2(\mathbb{R}^N)$ be a positive real 
			function that is radially symmetric and
			decreasing for every $|x|>1.$ 
			Assume also that $\varphi(x)\le |x|^{-\sigma}$, $|\nabla \varphi(x)|\leq c_0|x|^{-\sigma-1}$ and 
			$|D^2\varphi(x)|\le c_0|x|^{-\sigma-2}$ for some 
			$\sigma>0$ and $|x|$ large enough.	
			Then there exist $\widetilde{R}>1$ and some 
			positive constants $c_1,c_2,c_3$ that depend only on 
			$\sigma,s,N,$ and $\|\varphi\|_{C^{2}(\mathbb{R}^2)}$, 
			such that  
			\[
				|(-\Delta)^s\varphi(x)|\le
				\begin{cases}
					\dfrac{c_1}{|x|^{\sigma+2s}} &\text{if }\sigma<N,\\[8pt]
					\dfrac{c_2\log(|x|)}{|x|^{N+2s}}&\text{if }\sigma=N,
					\\[8pt]
					\dfrac{c_3}{|x|^{N+2s}}&\text{if }\sigma>N,
				\end{cases}
			\]
			for every $|x|>\widetilde{R}>1$.	
			For $\sigma>N$ the reverse estimate holds from below, that is,
			if $\varphi\ge0$ there exists a positive constant $c_4$ 
			such that
			\[
				|(-\Delta)^s\varphi(x)|\ge \dfrac{c_4}{|x|^{N+2s}},
			\] 
			for all $|x|>\widetilde{R}>1.$
		\end{lemma}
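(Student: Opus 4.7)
The plan is to work from the symmetric principal value formula
\[
(-\Delta)^s\varphi(x)=C_{N,s}\int_{\mathbb{R}^N}\frac{2\varphi(x)-\varphi(x+y)-\varphi(x-y)}{|y|^{N+2s}}\,dy
\]
and split the integral at the scale $|y|\sim |x|/2$. Writing $A_1=\{|y|\le |x|/2\}$ and $A_2=\{|y|>|x|/2\}$, the three regimes of $\sigma$ in the lemma will arise solely from the far-field integral over $A_2$ once one changes variables; the near-field contribution over $A_1$ is controlled uniformly by $|x|^{-\sigma-2s}$, which is compatible with every case. For the reverse inequality when $\sigma>N$, I would revert to the non-symmetric form $(-\Delta)^s\varphi(x)=C_{N,s}\int_{\mathbb{R}^N}(\varphi(x)-\varphi(y))|x-y|^{-N-2s}\,dy$.

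For $A_1$, the point is that whenever $|y|\le |x|/2$ and $\tau\in[-1,1]$ one has $|x+\tau y|\ge |x|/2$, so the pointwise hypothesis on $D^2\varphi$ yields $|D^2\varphi(x+\tau y)|\le C|x|^{-\sigma-2}$. A second order Taylor expansion of $y\mapsto \varphi(x+y)+\varphi(x-y)$ around $0$ gives $|2\varphi(x)-\varphi(x+y)-\varphi(x-y)|\le C|x|^{-\sigma-2}|y|^2$, and since $2s<2$ the polar integral converges at $0$ and produces a contribution bounded by $C|x|^{-\sigma-2s}$.

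For $A_2$, the summand $2\varphi(x)$ contributes at most $C\varphi(x)|x|^{-2s}\le C|x|^{-\sigma-2s}$. For the $\varphi(x\pm y)$ pieces I would substitute $z=x\pm y$ and study
\[
I(x):=\int_{|z-x|\ge |x|/2}\frac{\varphi(z)}{|z-x|^{N+2s}}\,dz,
\]
splitting at $|z|=2|x|$. On $\{|z|\ge 2|x|\}$ one has $|z-x|\ge |z|/2$, so the decay $\varphi(z)\le |z|^{-\sigma}$ yields a piece bounded by $C\int_{|z|\ge 2|x|}|z|^{-N-2s-\sigma}\,dz=C|x|^{-\sigma-2s}$. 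On the complementary region $\{|z|<2|x|,\,|z-x|\ge |x|/2\}$ I would use $|z-x|^{-N-2s}\le C|x|^{-N-2s}$ together with $\int_{|z|\le 2|x|}\varphi\,dz\le C+\int_1^{2|x|}r^{N-1-\sigma}\,dr$. This last integral is of order $|x|^{N-\sigma}$, $\log|x|$, or $O(1)$ according as $\sigma<N$, $\sigma=N$, or $\sigma>N$; multiplying by $|x|^{-N-2s}$ produces exactly the three upper bounds stated in the lemma.

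For the lower bound when $\sigma>N$, the contribution of the $\varphi(x)$ term in the non-symmetric form is only $\varphi(x)|x|^{-2s}\le |x|^{-\sigma-2s}=o(|x|^{-N-2s})$. Restricting the $\varphi(y)$ integral to a fixed ball where $\varphi\ge c>0$ (which exists by positivity and continuity) produces a contribution of order $|x|^{-N-2s}$, since $|x-y|^{N+2s}$ is comparable to $|x|^{N+2s}$ there for $|x|$ large. Hence $(-\Delta)^s\varphi(x)$ is negative with $|(-\Delta)^s\varphi(x)|\ge c_4|x|^{-N-2s}$. The main technical nuisance is the borderline case $\sigma=N$, where a logarithmic factor comes from the critical integral $\int_1^{2|x|}r^{-1}\,dr$ and one must verify that it is not absorbed into any of the other pieces; away from this threshold the two scales $|x|^{-\sigma-2s}$ and $|x|^{-N-2s}$ decouple cleanly.
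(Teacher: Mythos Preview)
The paper does not prove this lemma at all: it simply cites \cite[Lemma 2.1]{MR3122168} (Bonforte--V\'azquez). Your argument---splitting at scale $|x|/2$, controlling the near region via the second-order Taylor remainder and the $D^2\varphi$ bound, and extracting the three regimes from $\int_{|z|\le 2|x|}\varphi(z)\,dz$ in the far region---is correct and is essentially the proof given in that reference.

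One wording issue in your lower-bound paragraph: in the non-symmetric principal-value form the ``$\varphi(x)$ term'' is not separately integrable, so the sentence ``the contribution of the $\varphi(x)$ term is only $\varphi(x)|x|^{-2s}$'' is not literally meaningful. What makes your argument work is that, after the \emph{same} $A_1/A_2$ decomposition you used for the upper bound, every piece except
\[
-\,C_{N,s}\int_{|y|>|x|/2}\frac{\varphi(x+y)+\varphi(x-y)}{|y|^{N+2s}}\,dy
\]
is $O(|x|^{-\sigma-2s})=o(|x|^{-N-2s})$ when $\sigma>N$; restricting this remaining (nonpositive) integral to the set $\{x+y\in B_1\}$, where $\varphi\ge c>0$ and $|y|\asymp |x|$, gives a contribution of size at least $c'|x|^{-N-2s}$. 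That yields $(-\Delta)^s\varphi(x)\le -c_4|x|^{-N-2s}$ for large $|x|$, hence the stated lower bound on $|(-\Delta)^s\varphi(x)|$. With this clarification your proof is complete.
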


       For any $q>1$ let us consider now the equation
        \begin{equation}
            \label{eq:subsolution}
                (-\Delta)^s u(x) +|\nabla u|^q=0
                \quad\text{in } \mathbb{R}^N\setminus B_R,
        \end{equation}
        for $R$ large enough. Using the previous result we get the next
        \begin{lemma}\label{lemma:subsolution1}
            Let $1<2s<N,$ $\phi\in \mathcal{C}^2(\mathbb{R}^N)$ be as in 
            Lemma \ref{lema.BV}. If 
            \begin{align*}
            	1<&q<\dfrac{N+2s}{N+1} \quad\text{ and }
            	\quad \sigma\ge \dfrac{N+2s}q-1>N,\quad\text{or} \\
            	&q=\dfrac{N+2s}{N+1} \quad\text{ and } \quad\sigma>N,
            \end{align*}
            then
            for any $A>0$ small enough the function $\phi_A(x)=A\phi(x)$
            is a classical subsolution of \eqref{eq:subsolution}. 
        \end{lemma}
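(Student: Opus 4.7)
The plan is to exploit homogeneity. Since $(-\Delta)^s$ is linear,
$$(-\Delta)^s\phi_A(x)+|\nabla\phi_A(x)|^q = A\,(-\Delta)^s\phi(x)+A^{q}|\nabla\phi(x)|^{q},$$
so the subsolution inequality is equivalent to $(-\Delta)^s\phi(x)\le -A^{q-1}|\nabla\phi(x)|^{q}$. Since $q>1$, the factor $A^{q-1}$ can be made arbitrarily small. The idea is therefore to prove that $(-\Delta)^s\phi$ is negative of order $|x|^{-(N+2s)}$ at infinity while $|\nabla\phi|^{q}$ decays at least that fast, and then choose $A$ small to absorb the constants.

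First, I would refine Lemma \ref{lema.BV} in the range $\sigma>N$ to a one-sided estimate: there exist $\widetilde R>1$ and $c_5>0$ such that $(-\Delta)^s\phi(x)\le -c_5|x|^{-(N+2s)}$ for $|x|>\widetilde R$. To see this I split the integral in the definition of $(-\Delta)^s\phi(x)$ at $|y|=|x|/2$ and $|y|=2|x|$. On the inner region $\{|y|<|x|/2\}$ one has $|x-y|\simeq|x|$ and $\phi(y)\ge\phi(x)$ by radial monotonicity, so that piece equals
$$-\frac{C_{N,s}}{|x|^{N+2s}}\!\int_{|y|<|x|/2}\!\phi(y)\,dy\ +\ O\bigl(|x|^{-(\sigma+2s)}\bigr);$$
since $\sigma>N$ implies $\phi\in L^1(\mathbb{R}^N)$, this yields a negative contribution of the claimed order. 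The outer region $\{|y|>2|x|\}$ contributes at most $O(|x|^{-(\sigma+2s)})$ via the decay $\phi(y)\lesssim|y|^{-\sigma}$, and on the annulus $\{|x|/2<|y|<2|x|\}$ one passes to the symmetric form of the fractional Laplacian and uses $|D^2\phi|\lesssim|x|^{-(\sigma+2)}$ to get another $O(|x|^{-(\sigma+2s)})$ contribution. Because $\sigma>N$, both correction terms are of lower order than $|x|^{-(N+2s)}$ and the sign is fixed by the inner region.

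With this refinement and the hypothesis $|\nabla\phi(x)|\le c_0|x|^{-\sigma-1}$, the subsolution condition reduces to
$$A^{q-1}\le\frac{c_5}{c_0^{q}}\,|x|^{\,q(\sigma+1)-(N+2s)}.$$
Under the first case of the hypothesis, $\sigma\ge (N+2s)/q-1$ makes the exponent non-negative, so the right-hand side is at least a positive constant on $\{|x|\ge 1\}$; under the second case, $q=(N+2s)/(N+1)$ together with $\sigma>N$ produces a strictly positive exponent and the right-hand side is bounded below on $\{|x|\ge R\}$ for any $R\ge\widetilde R$. In either situation, any sufficiently small $A>0$ makes the inequality valid throughout $\mathbb{R}^{N}\setminus B_R$, and $\phi_A$ is the desired subsolution.

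The main difficulty is the sign refinement of Lemma \ref{lema.BV}: the lemma controls $|(-\Delta)^s\phi|$ from above and, for $\sigma>N$, from below, but does not by itself supply the sign needed to absorb the nonnegative gradient term. The integral-splitting argument above is precisely what is needed to isolate the dominant negative contribution coming from the mass of $\phi$ concentrated near the origin, and it makes transparent why the hypothesis $\sigma>N$ (tied to $q$ through $\sigma\ge(N+2s)/q-1$) is the sharp one for the comparison between $(-\Delta)^s\phi$ and $|\nabla\phi|^{q}$ to succeed.
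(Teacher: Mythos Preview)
Your proposal is correct and follows essentially the same route as the paper: use the decay bound from Lemma~\ref{lema.BV} together with $|\nabla\phi|\le c_0|x|^{-\sigma-1}$, compare the exponents $(\sigma+1)q$ and $N+2s$, and then take $A$ small. The only difference is that the paper simply invokes Lemma~\ref{lema.BV} (via its cited source) to get $(-\Delta)^s\phi(x)\le c\,|x|^{-N-2s}$ with $c<0$, whereas you supply the integral-splitting argument for the sign yourself; this makes your write-up more self-contained but is not a different method.
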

    
        \begin{proof}
            If $\sigma>N$, using Lemma \ref{lema.BV} it follows that
            \[
                (-\Delta)^s\phi(x)\le c(N,s,\sigma) |x|^{-N-2s}\quad
                \text{in }\mathbb{R}^N\setminus B_{\widetilde{R}},
            \]
            with $ c(N,s,\sigma)<0.$
            Thus
            \[
                (-\Delta)^s \phi_A(x)+|\nabla \phi_A(x)|^q
                \le  c(N,s,\sigma) A |x|^{-N-2s}+A^qc_0^q|x|^{-(\sigma+1)q}
                \quad\text{in }\mathbb{R}^N\setminus B_{\widetilde{R}},
            \] Hence, if $\displaystyle1<q<\tfrac{N+2s}{N+1}$ 
            and $\sigma\ge \tfrac{N+2s}{q}-1$
            or if $q=\tfrac{N+2s}{N+1}$ and $\sigma>N$
            then 
             \[
                (-\Delta)^s \phi_A(x)+|\nabla \phi_A(x)|^q
                \le (c(N,s,\sigma) A+A^qc_0^q) |x|^{-N-2s}\le0
                \quad\text{in }\mathbb{R}^N\setminus B_{\widetilde{R}},
            \]
            as long as $A$ is small enough.
        \end{proof}
    
    	Choosing a particular function in the class of those referred in Lemma \ref{lema.BV}, 
    	we can also get the existence of a 
    	subsolution of the homogeneous equation \eqref{eq:subsolution} 
    	for $q>\tfrac{N+2s}{N+1}$ for a suitable range of the positive parameter $\sigma$. 
    	Indeed, we have the following
        \begin{lemma}\label{lemma:subsolution2}
            Let $1<2s<N.$ If 
            \begin{align*}
            	\frac{N+2s}{N+1}<&q<\frac{N}{N+1-2s}
            	\quad\text{ and }\quad
            	N>\sigma\ge\frac{2s-q}{q-1}>0, \text{ or }\\
            	&q\ge\frac{N}{N+1-2s}\quad \text{ and }\quad N>\sigma>N-2s>0,
            \end{align*}
			then for all $A,\varepsilon>0$ 
	        small enough
	        \[
                w_{\sigma.\varepsilon}^A(x)\coloneqq
                \begin{cases}
                    A\varepsilon^{-\sigma} &\text{if } |x|\le\varepsilon,\\
                    A|x|^{-\sigma}&\text{if } |x|>\varepsilon,
                \end{cases}
            \]
            is a classical subsolution of \eqref{eq:subsolution}.
        \end{lemma}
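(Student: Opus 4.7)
The plan is to reduce the computation of $(-\Delta)^s w^1_{\sigma,\varepsilon}$ to the classical Riesz identity for $(-\Delta)^s(|x|^{-\sigma})$ by subtracting the contribution of the cutoff. Setting
\[
h(y) := (|y|^{-\sigma} - \varepsilon^{-\sigma})\,\mathbf{1}_{B_\varepsilon}(y) \ge 0,
\]
one has $w^1_{\sigma,\varepsilon}(y) = |y|^{-\sigma} - h(y)$, and for $|x| > \varepsilon$ the function $h$ vanishes at $x$, so
\[
(-\Delta)^s w^1_{\sigma,\varepsilon}(x) = (-\Delta)^s(|\cdot|^{-\sigma})(x) + C_{N,s}\int_{B_\varepsilon}\frac{h(y)}{|x-y|^{N+2s}}\,dy.
\]
The Riesz identity yields $(-\Delta)^s(|\cdot|^{-\sigma})(x) = \kappa(N,s,\sigma)|x|^{-\sigma-2s}$; a short $\Gamma$-function sign check (parallel to the local computation $-\Delta(|x|^{-\sigma}) = \sigma(N-2-\sigma)|x|^{-\sigma-2}$, which changes sign across $\sigma=N-2$) shows that $\kappa<0$ throughout the range $N-2s<\sigma<N$. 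This range is precisely the one singled out by both hypotheses: in the second case it is imposed directly, while in the first case the bound $\sigma\ge(2s-q)/(q-1)$ exceeds $N-2s$ as soon as $q>(N+2s)/(N+1)$.

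For the correction term, $|x-y|\ge|x|/2$ whenever $|y|\le\varepsilon\le|x|/2$, while a direct computation (using $\sigma<N$) yields $\int_{B_\varepsilon}h(y)\,dy\le C_1\,\varepsilon^{N-\sigma}$. Therefore
\[
0\le C_{N,s}\int_{B_\varepsilon}\frac{h(y)}{|x-y|^{N+2s}}\,dy\le C_2\,\varepsilon^{N-\sigma}\,|x|^{-N-2s},
\]
which decays strictly faster than the leading term $-c|x|^{-\sigma-2s}$ (with $c:=-\kappa>0$) precisely because $\sigma<N$. Consequently, for $|x|\ge R$ with $R=R(\varepsilon,\sigma,s,N)$ chosen large enough, the correction is absorbed by half the leading term, giving
\[
(-\Delta)^s w^A_{\sigma,\varepsilon}(x)\le -\dfrac{Ac}{2}|x|^{-\sigma-2s}\quad\text{on }\mathbb{R}^N\setminus B_R.
\]

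Since $|\nabla w^A_{\sigma,\varepsilon}(x)|^q=A^q\sigma^q|x|^{-(\sigma+1)q}$ for $|x|>\varepsilon$, the subsolution inequality then reduces to
\[
A^{q-1}\sigma^q\,|x|^{\,2s-q-\sigma(q-1)}\le \dfrac{c}{2}.
\]
Both hypotheses on $\sigma$ are tailored so that $\sigma(q-1)\ge 2s-q$; indeed, in the second case $q\ge N/(N+1-2s)$ together with $s>1/2$ yields $(2s-q)/(q-1)\le N-2s<\sigma$. The exponent of $|x|$ is therefore nonpositive, the left-hand side remains bounded as $|x|\to\infty$, and the desired inequality follows by taking $A$ small enough (and, at the borderline $\sigma(q-1)=2s-q$, enlarging $R$ if needed). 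The main subtlety lies in the sign analysis of the Riesz constant $\kappa(N,s,\sigma)$: it is precisely this sign change across the $s$-harmonic exponent $\sigma=N-2s$ that forces the lower bounds on $\sigma$ in the two regimes of the statement.
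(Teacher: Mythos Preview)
Your argument is correct and follows essentially the same route as the paper: decompose $w_{\sigma,\varepsilon}$ as $|x|^{-\sigma}$ minus the cutoff $h$, invoke the Riesz identity with its constant $\gamma_\sigma<0$ on $(N-2s,N)$, control the correction integral by $C\varepsilon^{N-\sigma}|x|^{-N-2s}$, and then compare exponents to reduce the subsolution inequality to a smallness condition on $A$. One small slip: in the first regime, the fact that $(2s-q)/(q-1)>N-2s$ comes from the \emph{upper} bound $q<\tfrac{N}{N+1-2s}$ (the map $q\mapsto(2s-q)/(q-1)$ is decreasing and equals $N-2s$ at $q=\tfrac{N}{N+1-2s}$), not from $q>\tfrac{N+2s}{N+1}$ as you wrote; since both bounds are assumed, the conclusion is unaffected.
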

        \begin{proof}
            By \cite[Lemma 4.1]{FALL2018} (see also \cite[Remark 4.7 (iii)]{BQ} and \cite{MR2425175}) 
            we get that
            \[  
                v_\sigma(x)\coloneqq |x|^{-\sigma} \quad \sigma\in(-2s,N),
            \]
            is a classical solution of
            \[
                (-\Delta)^s v(x)=\gamma_\sigma |x|^{-2s}v\quad
                \text{in }\mathbb{R}^N\setminus\{0\},
            \]
            where
            \[
                \gamma_\sigma=2^{2s}
                \dfrac{\Gamma\left(\dfrac{N-\sigma}{2}\right)
                \Gamma\left(\dfrac{2s+\sigma}{2}\right)}
                {\Gamma\left(\dfrac{\sigma}{2}\right)
                \Gamma\left(\dfrac{N-2s-\sigma}{2}\right)},
            \]
            is a concave function that is negative if $\sigma\in (-2s,0)\cup(N-2s,N)$ (see \cite[Corollary 3.1]{MR2739791}).        
            Taking $\varepsilon>0$ and 
            \[
                w_{\sigma.\varepsilon}(x)\coloneqq
                \begin{cases}
                    \varepsilon^{-\sigma} &\text{if } |x|\le\varepsilon,\\
                    v_{\sigma}(x)&\text{if } |x|>\varepsilon,
                \end{cases}
            \]
             we get
            \begin{align*}
                (-\Delta)^s w_{\sigma.\varepsilon}(x) &= (-\Delta)^s v_\sigma(x)
                +\int_{B_\varepsilon}\dfrac{|y|^{-\sigma}-\varepsilon^{-\sigma}}{|x-y|^{N+2s}} dy\\
                &\le\gamma_\sigma |x|^{-\sigma-2s}+ C\dfrac{\varepsilon^{-\sigma+N}}{(|x|-\varepsilon)^{N+2s}},
            \end{align*}
for every $|x|\gg 1$. Then, for any $\sigma\in (N-2s,N),$ $\varepsilon$ small enough and $|x|\gg1$ we have
            \[
                (-\Delta)^s w_{\sigma.\varepsilon}(x) \le \gamma_{\sigma,\varepsilon} |x|^{-\sigma-2s},
            \]
            where $\gamma_{\sigma,\varepsilon}=\gamma_\sigma +C\varepsilon^{-\sigma+N}<0.$
            We observe now that
          	\[
                \sigma+2s\le (1+\sigma)q\Longleftrightarrow \sigma\ge\dfrac{2s-q}{q-1}\eqqcolon \alpha(q),
          	\]
          	and, moreover, the function $\alpha(q)$ is decreasing in $(1,\infty)$ and satisfies
          	\[
            	 \alpha\left(\frac{N+2s}{N+1}\right)=N\quad\text{and}\quad  
                \alpha\left(\frac{N}{N+1-2s}\right)=N-2s.
          	\] 
			Hence, under our hypothesis, 
            \[ 
                \begin{aligned}
                    (-\Delta)^s w_{\sigma.\varepsilon}^A(x)+|\nabla w_{\sigma,\varepsilon}^A(x)|^q
                    &\le \gamma_{\sigma,\varepsilon}A |x|^{-\sigma-2s}+A^q\sigma^q|x|^{-(\sigma+1)q}\\
                    &\le (\gamma_{\sigma,\varepsilon}A+A^q\sigma^q)|x|^{-\sigma-2s}, \quad
                   |x|>1,
                \end{aligned}
            \]
            due to  
            $w_{\sigma.\varepsilon}^A(x)= Aw_{\sigma.\varepsilon}(x).$ Then the conclusion 
            follows taking $A$ small enough.                    
		\end{proof}
    
     \subsection{Comparison Principle and some pointwise inequalities} 
     	To conclude this section we show now two different versions of the Comparison Principle for our equations and some useful inequalities that will be needed later. Before that we want to mention that some other kind of comparison principles for different types of solutions in the nonlocal framework can be founded in, for instance, \cite[Lemma 2.9]{MR3341459}, \cite[Lemma 3.1]{BQ}, \cite[Theorem 3.2]{MR2868849}, \cite[Proposition 2.5]{MR3631323}, \cite[Lemma 6]{MR3713547} and \cite [Proposition 2.17]{MR2270163}. We begin with the next

     	\begin{theorem}\label{theorem:cp}
            Let $q>0,$ $u$ and $v$ be classical sup and subsolution of 
            \[
                (-\Delta)^s w
                + |\nabla w|^q=0 \quad\text{in } 
                \mathbb{R}^N\setminus B_{R_0},
            \]
            respectively. 
            If $u$ is positive in 
            $\mathbb{R}^N\setminus B_{R_0},$
            \[
                v(x)\to 0\quad\text{as }|x|\to\infty,
            \]
            and $u(x)\ge v(x)$ in $B_{R_0}$ 
            then $u(x)\ge v(x)$ in $\mathbb{R}^N.$
        \end{theorem}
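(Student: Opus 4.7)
The plan is to argue by contradiction using the fact that at an interior maximum of $w\coloneqq v-u$, the gradient vanishes so the first-order term cancels, and the fractional Laplacian of $w$ has a definite sign.

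Set $w\coloneqq v-u$ and suppose, towards a contradiction, that $M\coloneqq\sup_{\mathbb{R}^N} w>0$. The hypotheses give $w\le0$ in $B_{R_0}$, while on $\mathbb{R}^N\setminus B_{R_0}$ the functions $u,v$ are $\mathcal{C}^2$, and since $v(x)\to0$ at infinity and $u>0$ everywhere outside $B_{R_0}$, we have $\limsup_{|x|\to\infty} w(x)\le 0$. Therefore any sequence $x_n$ with $w(x_n)\to M$ is bounded and, by continuity of $w$ on $\mathbb{R}^N\setminus B_{R_0}$ (which we may assume closed), admits a subsequence converging to some point $x_0\in\mathbb{R}^N\setminus B_{R_0}$ where $w(x_0)=M>0$. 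In particular $x_0$ lies in the open complement of $\overline{B_{R_0}}$, since $w\le 0$ there.

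Because $x_0$ is an interior point of the $\mathcal{C}^2$ region and a global maximum of $w$, first-order optimality gives $\nabla w(x_0)=0$, hence $\nabla v(x_0)=\nabla u(x_0)$ and thus $|\nabla v(x_0)|^q=|\nabla u(x_0)|^q$. Subtracting the subsolution inequality for $v$ from the supersolution inequality for $u$ at $x_0$ yields
\[
(-\Delta)^s w(x_0) \le |\nabla u(x_0)|^q-|\nabla v(x_0)|^q=0.
\]
On the other hand, using the singular-integral representation and $w(x_0)\ge w(y)$ for every $y\in\mathbb{R}^N$,
\[
(-\Delta)^s w(x_0)=C_{N,s}\int_{\mathbb{R}^N}\frac{w(x_0)-w(y)}{|x_0-y|^{N+2s}}\,dy\ge 0,
\]
with equality only if $w(y)=w(x_0)=M$ for a.e.\ $y$. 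Since $w\le 0$ on $B_{R_0}$ while $M>0$, such equality is impossible, so the last inequality is strict. Combining the two bounds gives the desired contradiction, and we conclude that $w\le 0$, i.e.\ $u\ge v$ in $\mathbb{R}^N$.

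The technical point to watch will be ensuring the supremum of $w$ is actually attained in the region where $u$ and $v$ are smooth: this requires checking that $w$ is upper semicontinuous up to $\partial B_{R_0}$ and that the positive supremum cannot leak to the boundary sphere (guaranteed by $u\ge v$ in $B_{R_0}$) nor to infinity (guaranteed by $v(x)\to0$ and $u>0$). All other steps are standard: at the attained maximum one combines the gradient cancellation with the strict positivity of the nonlocal term to close the argument.
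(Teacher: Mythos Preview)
Your argument is correct and is essentially the same as the paper's proof: both set $w=v-u$, locate a global maximum $x_0$ of $w$ in $\mathbb{R}^N\setminus B_{R_0}$ via the decay of $v$ and positivity of $u$, use $\nabla w(x_0)=0$ to cancel the gradient terms, and combine $(-\Delta)^s w(x_0)\ge 0$ with the sub/supersolution inequalities to force $w\equiv w(x_0)>0$, contradicting $w\le 0$ in $B_{R_0}$. Your treatment is slightly more explicit about the attainment of the supremum, but the core argument is identical.
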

		\begin{proof} 
       		Let $w(x)\coloneqq v(x)-u(x)\leq 0$ in $B_{R_0}$ and 
       		let us suppose that there exists $x_1\in \mathbb{R}^N\setminus 
       		B_{R_0}$ such that $w(x_1)> 0$. Since $u$ is positive in 
       		$\mathbb{R}^N\setminus B_{R_0}$, $v(x)\to 0$ as $|x|\to\infty,$ 
       		and $u(x)\ge v(x)$ in $B_{R_0}$  
       		there is $x_0\in \mathbb{R}^N\setminus B_{R_0}$ such that 
            \[
            	w(x_0)\ge w(x),\quad x\in\mathbb{R}^N,
            \]
            Thus
            \[
                0=\nabla w(x_0)=\nabla v(x_0)-\nabla u(x_0),
            \]
            and
            \[
                0\le(-\Delta)^s w(x_0)=(-\Delta)^s v(x_0)-(-\Delta)^s u(x_0)
                \le 0.
            \]
            Therefore $w(x)=w(x_0)$ for every $x\in\mathbb{R}^N$ 
            which is a contradiction, because
            $w(x_0)>0$ and $w(x)\le0$ in  $B_{R_0}.$ 
            Thus $w(x)\leq 0$ in $\mathbb{R}^N\setminus B_{R_0}$ as wanted.
        \end{proof}    
     	\begin{theorem}\label{theorem:cpp}
            Let $q>0,$ $f\colon\mathbb{R}\to\mathbb{R}$ be a continuous function and 
            $u$ and $v$ be such that
            \[
                (-\Delta)^s u
                + |\nabla u|^q\ge f(x)\ge (-\Delta)^s v
                + |\nabla v|^q \quad\text{in } 
                R_0<|x|<R,
            \]
        	in classical sense.
            If $u\ge v$ in 
            $B_{R_0}\cup \left(\mathbb{R}^N\setminus B_{R}\right)$,
            then $u(x)\ge v(x)$ in $\mathbb{R}^N.$
        \end{theorem}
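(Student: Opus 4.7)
The plan is to mimic the argument of Theorem \ref{theorem:cp}, adapted to a bounded annular region with two-sided boundary data. Set $w(x)\coloneqq v(x)-u(x)$; the goal is to show $w\le 0$ in $\mathbb{R}^N$. By hypothesis we already have $w\le 0$ on $B_{R_0}\cup(\mathbb{R}^N\setminus B_R)$, so we only need to rule out positive values of $w$ inside the open annulus $\{R_0<|x|<R\}$.

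Suppose by contradiction that $\sup_{\mathbb{R}^N} w>0$. Since $w$ is continuous on the closed annulus (by the $\mathcal{C}^2$ regularity of classical sub/supersolutions in this region) and is non-positive on $\partial B_{R_0}\cup \partial B_R$, the supremum must be attained at an interior point $x_0$ with $R_0<|x_0|<R$; moreover this value is the global supremum on $\mathbb{R}^N$, because outside the annulus $w\le 0<w(x_0)$. At $x_0$ one has the classical critical-point information $\nabla w(x_0)=0$, which gives the key identity
\[
|\nabla u(x_0)|^q=|\nabla v(x_0)|^q.
\]
On the other hand, since $w(x_0)\ge w(y)$ for every $y\in\mathbb{R}^N$, the principal-value integral defining $(-\Delta)^s w(x_0)$ has a nonnegative integrand, hence $(-\Delta)^s w(x_0)\ge 0$.

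Now I subtract the two differential inequalities at $x_0$: the supersolution inequality for $u$ and the subsolution inequality for $v$ together give
\[
(-\Delta)^s v(x_0)-(-\Delta)^s u(x_0)+|\nabla v(x_0)|^q-|\nabla u(x_0)|^q\le 0.
\]
Because the gradient terms cancel, this yields $(-\Delta)^s w(x_0)\le 0$, and combined with the previous inequality we get $(-\Delta)^s w(x_0)=0$. Since the integrand $\frac{w(x_0)-w(y)}{|x_0-y|^{N+2s}}$ is nonnegative and its integral vanishes, $w(y)=w(x_0)$ for a.e.\ $y\in\mathbb{R}^N$, contradicting $w\le 0$ on $B_{R_0}$ while $w(x_0)>0$. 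Hence $w\le 0$ in $\mathbb{R}^N$, which is the claim.

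The only delicate point is the attainment of the supremum and the fact that a local maximum in the open annulus is automatically a global one: both rely on $w$ being continuous on the closed annulus (from the classical regularity of $u,v$) and on the sign control of $w$ on the complement of the annulus provided by the hypotheses. Everything else is a direct transcription of the linear argument used in Theorem \ref{theorem:cp}, the crucial algebraic observation being that at a critical point of $w$ the nonlinear gradient terms subtract to zero, so only the nonlocal part survives.
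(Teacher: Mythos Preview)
Your argument is correct and is precisely the adaptation of the proof of Theorem~\ref{theorem:cp} that the paper has in mind: in fact the paper states Theorem~\ref{theorem:cpp} without proof, leaving it to the reader as the obvious variant of Theorem~\ref{theorem:cp} for a bounded annulus with two-sided complement data. Your write-up makes that variant explicit, with the same contradiction via a global maximum of $w=v-u$, the vanishing of $\nabla w$ forcing cancellation of the gradient terms, and the rigidity $(-\Delta)^s w(x_0)=0\Rightarrow w\equiv w(x_0)$.
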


	\medskip
    
    We finish this section with the next two lemmas that are a simply generalization of \cite[Lemmas 3 and 4]{MR3039209}. Since their proofs are analogous we omit them.
    
    \begin{lemma}\label{lemma:AGMQ3}
        Let $q>1,$ $p>0,$ $s>1/2$ and $h(R)$ be a positive decreasing function
        defined for $R>R_0$ verifying 
        \[
            h(2R)^p\le C\left(\dfrac{h(R)}{R^{2s}}+\dfrac{h(R)^q}{R^{q}}\right),
        \]
        for $R>R_0$ and some positive constant $C>0.$ Then.
        
        \begin{enumerate}[label={\alph*)}]
	        \item If  $0<p\le 1$ then for every 
	        $\theta<0$ there exists a positive constant $C$
	        such that
	        \[
	            h(R)\le CR^{\theta},
	        \] 
	        for every $R>R_0.$
	        
	        \item If $1<q<2s$ and $1<p<\frac{2s-1}{2s-q}q$ or $q\ge 2s$  and $p>1$, then for every
	        $\theta\in\left(-\frac{2s}{p-1},0\right)$ there exists a positive constant $C$
	        such that
	        \[
	            h(R)\le CR^{\theta},
	        \] 
	        for every $R>R_0.$

        \end{enumerate}
    \end{lemma}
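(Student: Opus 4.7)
The plan is to iterate the functional inequality, producing a sequence of polynomial bounds $h(R)\le C_k R^{-\alpha_k}$ with $\alpha_k$ strictly increasing. Since $h$ is positive and decreasing, it is bounded on $(R_0,\infty)$, giving the base case $\alpha_0=0$. Assuming $h(R)\le C_k R^{-\alpha_k}$ for $R$ large, the hypothesis yields
\[
h(2R)^p \le C\bigl(C_k R^{-\alpha_k-2s}+C_k^q R^{-q(\alpha_k+1)}\bigr),
\]
so, taking $p$-th roots and absorbing the shift $R\mapsto R/2$ into a new constant,
\[
h(R)\le C_{k+1} R^{-T(\alpha_k)}, \qquad T(\alpha)\coloneqq \frac{1}{p}\min\{\alpha+2s,\ q(\alpha+1)\}.
\]
The two branches of $T$ coincide at $\gamma\coloneqq (2s-q)/(q-1)$, which is $\le 0$ when $q\ge 2s$ and strictly positive when $1<q<2s$.

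For (a), with $0<p\le 1$: on the linear branch one has $T(\alpha)\ge \alpha+2s$, and on the other $T(\alpha)\ge q(\alpha+1)\ge \alpha+q$ for $\alpha\ge 0$ (since $q>1$). Hence $T(\alpha)\ge \alpha+\min(2s,q)$, so $\alpha_k\to\infty$, and any polynomial decay $h(R)\le CR^\theta$ with $\theta<0$ is achieved after finitely many iterations.

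For (b), let $\beta\coloneqq 2s/(p-1)$, the fixed point of the linear branch of $T$. When $q\ge 2s$ we have $\gamma\le 0$, so starting from $\alpha_0=0$ we remain on the linear branch and the iteration $\alpha_{k+1}=(\alpha_k+2s)/p$ is monotone increasing toward $\beta$; any $\alpha<\beta$ is reached in finitely many steps. When $1<q<2s$ we must first cross $\gamma$: a direct computation gives
\[
T(\gamma)-\gamma=\frac{q(2s-1)-p(2s-q)}{p(q-1)},
\]
which is strictly positive precisely when $p<(2s-1)q/(2s-q)$, exactly our hypothesis. Combined with the fact that $T(\alpha)-\alpha$ is positive and bounded below on $[0,\gamma]$ (another short algebraic check on the Case 2 branch using the same bound on $p$), the sequence $\alpha_k$ increases by at least a fixed positive amount per step until it crosses $\gamma$, after which it proceeds on the linear branch and converges monotonically to $\beta$. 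This gives $h(R)\le CR^\theta$ for every $\theta\in(-\beta,0)=(-2s/(p-1),0)$.

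The main delicacy is the transition in case (b) with $1<q<2s$: the hypothesis $p<(2s-1)q/(2s-q)$ is precisely the condition that forces $T(\gamma)>\gamma$ and prevents the iteration from stalling where the two branches of $T$ meet. The constants $C_k$ worsen at each iteration, but this is harmless since only a fixed finite number of steps is required to reach any target $\alpha<\beta$.
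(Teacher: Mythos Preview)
Your iteration argument is correct and is exactly the standard route: the paper itself omits the proof, remarking only that it is a straightforward adaptation of \cite[Lemmas~3 and~4]{MR3039209}, whose proofs proceed by the same bootstrapping of polynomial bounds $h(R)\le C_kR^{-\alpha_k}$ via the map $T(\alpha)=\tfrac1p\min\{\alpha+2s,\,q(\alpha+1)\}$. Your identification of the crossover $\gamma=(2s-q)/(q-1)$ and the verification that the hypothesis $p<(2s-1)q/(2s-q)$ is equivalent to $T(\gamma)>\gamma$ (and also to $\gamma<\beta=2s/(p-1)$, which guarantees the iterates never overshoot the fixed point) is precisely the nonlocal analogue of the computation in that reference.
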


    \begin{lemma}\label{lemma:AGMQ4}
        Let $q>1,$ $p>0,$ $s>1/2$ and $h(R)$ be a positive decreasing function
        defined for $R>R_0$ satisfying 
        \[
            h(R)^p\le C\left(\dfrac{h(R)}{R^{2s}}+\dfrac{h(R)^q}{R^{q}}\right),
        \]
        for $R>R_0$ and some positive constant $C>0.$ 
        
        \begin{enumerate}[label={\alph*)}]
	        
	        \item If $1<q<2s$ and $p=\frac{2s-1}{2s-q}q$ then  
	        there exits a positive constant $C$
	        such that
	        \[
	            h(R)\le CR^{-\displaystyle\frac{2s}{p-1}},
	        \] 
	        for every $R>R_0.$
	        
	        \item If $q> \frac{N}{N+1-2s}$ and $p= \dfrac{N}{N-2s}$ then 
	         there exits a positive constant $C$ such that
	        \[
	            h(R)\le CR^{-N{+}2s},
	        \] 
	        for every $R>R_0.$
	        
        \end{enumerate}
    \end{lemma}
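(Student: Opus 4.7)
The plan is to exploit that at each $R>R_0$, the right-hand side of the hypothesis is dominated (up to a factor $2$) by twice its larger term, reducing the proof to a dichotomy. In contrast to Lemma \ref{lemma:AGMQ3}, where the doubling $h(2R)^p$ on the left-hand side forces an iteration scheme, here the left-hand side already involves $h(R)$ at the same scale, so a single algebraic inversion per case suffices. The decisive feature is that the critical relations between $p$, $q$ and $s$ assumed in (a) and (b) make the two resulting algebraic bounds coincide (or force one branch to be automatically excluded), so that whichever term dominates we reach the same conclusion.

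Concretely I would split the hypothesis into: either $h(R)^p\le 2Ch(R)/R^{2s}$, which gives $h(R)\le (2C)^{1/(p-1)}R^{-2s/(p-1)}$ immediately, or $h(R)^p\le 2Ch(R)^q/R^q$. For part (a), one checks that $p-q=q(q-1)/(2s-q)>0$ and $q/(p-q)=(2s-q)/(q-1)=2s/(p-1)$, so the second case yields $h(R)\le (2C)^{1/(p-q)}R^{-q/(p-q)}$ with exactly the same exponent as the first; the desired bound $h(R)\le CR^{-2s/(p-1)}$ then follows in either case.

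For part (b) we have $p=N/(N-2s)$ and hence $2s/(p-1)=N-2s$, so the first case delivers $h(R)\le C'R^{-(N-2s)}$ directly. For the second case, when $p>q$ a short computation shows that the assumption $q>N/(N+1-2s)$ is algebraically equivalent to $q/(p-q)\ge N-2s$, so $h(R)\le C'R^{-q/(p-q)}\le C'R^{-(N-2s)}$ for $R$ large. In the remaining range $p\le q$ the second branch cannot occur for large $R$: it would produce $h(R)\ge c\,R^{q/(q-p)}$ when $p<q$ (and an outright contradiction $1\le 2C/R^q$ when $p=q$), forcing $h(R)\to\infty$ against monotonicity; hence only the first case applies and the bound is again immediate.

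I expect the only delicate step to be the bookkeeping in part (b) when $p\le q$, where the direct algebraic inversion of the second branch gives a lower bound on $h$ rather than an upper bound; monotonicity rules it out. All remaining verifications reduce to elementary manipulations of the critical exponents, which is why the authors present this lemma as a mild adaptation of the local result in \cite{MR3039209}, with the role of $2$ there played here by $2s$.
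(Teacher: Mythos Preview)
Your argument is correct. The paper does not actually supply a proof of this lemma---it refers to the analogous local result \cite[Lemma~4]{MR3039209} and omits the details---and your elementary dichotomy (split according to which term on the right dominates, invert algebraically, and use that the critical exponent relations force $\tfrac{2s}{p-1}=\tfrac{q}{p-q}=\tfrac{2s-q}{q-1}$ in (a) and $\tfrac{q}{p-q}>N-2s$ in (b), with monotonicity of $h$ disposing of the branch $p\le q$) is precisely the intended approach.
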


\section{Nonexistence results}\label{nonexist}
	
	Throughout all this section, we will assume that $q>1$,
	$f\colon(0,\infty)\to\mathbb{R}$ is a continuous function, $\lambda>0$ and $u$ 
	be a  positive supersolution of \eqref{main} which does not blow up at infinity.
	
	Given $0<R_1<R_2,$ we define
	\begin{equation}\label{m}
		m(R_1,R_2)\coloneqq\min\left\{u(x)\colon x\in A(R_1,R_2)\right\},
	\end{equation}
	where
	\[
		A(R_1,R_2)\coloneqq\left\{x\colon R_1\le |x|\le R_2\right\}.
	\]

	Observe that $m(R_1,\cdot)$ is a {nonincreasing} 
	function in $(R_1,\infty)$ and 
	$m(\cdot,R_2)$ is a {nondecreasing} 
	function in $[0,R_2).$ 
	{As we mentioned in the introduction, unlike in the local case, it cannot be proved that 
	$m(R_1,R_2)=\min\{m(R_1), m(R_2)\}$ 
	where $m(R)=min\{u(x)\colon |x|=R\}$ is, in the local framework, 
	a monotone function for $R>R_1$.}
		
	\subsection{Preliminary results: some bounds for the $m$ function.} 
		In order to prove the nonexistence of positive supersolutions 
		 which do not blow up at infinity,
		we will get some previous auxiliar lemmas 	
		regarding with the function $m(R_1,R_2)$, $0<R_1<R_2$, defined in \eqref{m}. 
		More precisely we obtain 	
		upper and lower bounds for $m(0,R)$ that will be the key steps to obtain 
		the nonexistence results (see Lemmas \ref{lemma:aux2}, \ref{claims}
		and \ref{lemma:aux4}).

		\begin{lemma}\label{lemma:aux1}
			Let $\lambda>0,$ $f(t)>0$ in $(0,\infty)$ be a continuous function and $u$ be a positive 
			{classical} {supersolution} 
				of \eqref{main} {which does not blow up at infinity}. Then 
				\[
				    \lim_{R\to \infty}m(R,4R)=0,
				\]
				with $m$ given in \eqref{m}.	\end{lemma}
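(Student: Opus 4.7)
The plan is a proof by contradiction. Suppose $\lim_{R\to\infty} m(R,4R)\ne 0$. Since $u$ is positive and, by the non-blow-up hypothesis, bounded on $\mathbb{R}^N\setminus B_{R_0}$ by some $M>0$, there exist $\delta>0$ and a sequence $R_n\to\infty$ with $m(R_n,4R_n)\ge\delta$. Hence $u(x)\in[\delta,M]$ on every annulus $A(R_n,4R_n)$, and the continuity and positivity of $f$ on $(0,\infty)$ give
\[
  (-\Delta)^s u(x)+|\nabla u(x)|^q \;\ge\; \lambda f(u(x)) \;\ge\; \lambda c_0 \qquad \text{on } A(R_n,4R_n),
\]
where $c_0:=\min_{[\delta,M]}f>0$.

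The next step is to construct, for each $n$, an explicit classical subsolution $\psi_n$ of $(-\Delta)^s w+|\nabla w|^q\le \lambda c_0$ whose maximum diverges as $n\to\infty$. I would take $\psi_n(x):=A_n\phi\!\left(\tfrac{x-x_n}{\rho_n}\right)$, where $\phi\in\mathcal{C}_c^\infty(B_1)$ is a fixed nonnegative bump with $\phi(0)>0$, the point $x_n\in A(R_n,4R_n)$ is chosen with $\operatorname{dist}(x_n,\partial A(R_n,4R_n))\ge \rho_n$, and $\rho_n\sim R_n$. By a direct scaling computation $|\nabla\psi_n|^q\le C A_n^q\rho_n^{-q}$, while Lemma~\ref{lema.BV} (applied to the smooth, compactly supported profile $\phi$) gives $|(-\Delta)^s\psi_n|\le C A_n\rho_n^{-2s}$ uniformly on $\mathbb{R}^N$. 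Since $s>\tfrac12$ and $q>1$, the choice $A_n:=c\rho_n^{\min(2s,1)}=c\rho_n$ makes both terms no larger than $\lambda c_0$ for $\rho_n$ large enough, so $\psi_n$ is a classical subsolution everywhere with $\max\psi_n=A_n\phi(0)\to\infty$. Since $\psi_n\equiv 0\le u$ on $B_{R_n}\cup(\mathbb{R}^N\setminus B_{4R_n})$, the comparison principle of Theorem~\ref{theorem:cpp} applied on the annulus $R_n<|x|<4R_n$ with the constant right-hand side $\lambda c_0$ yields $u\ge\psi_n$ in $\mathbb{R}^N$. Evaluating at $x_n$ produces $M\ge u(x_n)\ge A_n\phi(0)\to\infty$, the sought contradiction.

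The main obstacle is not the construction of $\psi_n$ itself (which is quite explicit) but controlling the nonlocal term for a compactly supported profile: the contribution to $(-\Delta)^s\psi_n$ from outside $B_{\rho_n}(x_n)$ is precisely what Lemma~\ref{lema.BV} is designed to handle, yielding the $\rho_n^{-2s}$ decay. The hypothesis $s>\tfrac12$ enters the balance between the gradient scaling $\rho_n^{-q}$ and the fractional scaling $\rho_n^{-2s}$, ensuring that the amplitude $A_n$ can be taken unbounded while keeping $\psi_n$ a subsolution. Once this contradiction is reached, the conclusion $\lim_{R\to\infty}m(R,4R)=0$ follows.
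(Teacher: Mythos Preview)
Your argument is essentially correct but follows a genuinely different route from the paper. The paper proceeds \emph{directly}: it subtracts a scaled cut-off $m(2R,3R)\,\eta(|x|/R)$ from $u$, locates a global minimum $x_R$ of the resulting function in $A(R,4R)$, and reads off
\[
\lambda f(u(x_R))\le C\!\left(\frac{m(2R,3R)}{R^{2s}}+\frac{m(2R,3R)^q}{R^q}\right),
\]
which forces $f(u(x_R))\to0$ and hence $u(x_R)\to0$. This pointwise touching argument avoids any comparison principle and, more importantly, produces the very inequality that is recycled in Lemma~\ref{lemma:aux2}. Your approach, by contrast, is a barrier argument: you exploit the uniform lower bound $f(u)\ge c_0$ on the annuli to build compactly supported subsolutions of growing height and invoke Theorem~\ref{theorem:cpp}. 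This is clean and would also work, but it does not yield the quantitative estimate that the paper reuses downstream.

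Two small points. First, your citation of Lemma~\ref{lema.BV} is not quite the right tool: that lemma gives a \emph{far-field} decay estimate, whereas what you actually need is the elementary scaling identity
\[
(-\Delta)^s\psi_n(x)=A_n\rho_n^{-2s}\,[(-\Delta)^s\phi]\!\left(\tfrac{x-x_n}{\rho_n}\right),
\qquad \|(-\Delta)^s\phi\|_{L^\infty}<\infty,
\]
which holds because $\phi\in\mathcal{C}^2_c$. Second, with $A_n=c\rho_n$ the gradient term satisfies $|\nabla\psi_n|^q\le C c^q$, which is \emph{constant} in $\rho_n$, not vanishing; so the phrase ``for $\rho_n$ large enough'' applies only to the fractional term, and $c$ must be fixed small enough (depending on $\lambda c_0$) from the outset. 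Once $c$ is so fixed, $A_n=c\rho_n\to\infty$ and the contradiction goes through.
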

		\begin{proof}
			Let $R>R_0$ fixed but arbitrary and $\eta\in C^\infty(\mathbb{R})$ be such that
			
			\begin{minipage}{0.2\textwidth}
				    \[
					     \eta(t)\coloneqq
					            \begin{cases}
					                1 & \text{if } t\in[2,3],\\
					                0 & \text{if } t<1 \text{ or } t>4.
					            \end{cases}
					\]
			\end{minipage}
			\begin{minipage}{0.9\textwidth}
				\begin{center}
					    \begin{tikzpicture}
				            \draw[->] (0,0) -- (5,0) node[right] {$t$};
				            \draw[->] (0,0) -- (0,1.5) node[left] {$y$};
						    \draw[thick, blue,rounded corners=1mm] 
						    (0,0)--(1,0)--(2,1)--(3,1)--(4,0)--(5,0);
					        \draw (4,1.3) node[left] {$\eta(t)$};
					        \draw[densely dotted] (0,1) node[left] {$1$} -- (2,1);
					        \draw[densely dotted] (2,0) node[below] {$2$} -- (2,1);
					        \draw[densely dotted] (3,0) node[below] {$3$} -- (3,1);
				        \end{tikzpicture}
			  	\end{center}
			\end{minipage}

			We define 
			   \[
				    v(x)\coloneqq u(x)-m(2R,3R)\eta\left(\dfrac{|x|}{R}\right),\, R>0.
				\]
				Observe that,  there exists $x_R\in A(R,4R)$ such that 
				\[
				    v(x_R)\le v(x), \quad x\in\mathbb{R}^N.
				\] 
				Then
				\begin{align*}
					 0&\ge (-\Delta)^s v(x_R)=(-\Delta)^s u(x_R)-\dfrac{m(2R,{3}R)}{R^{2s}}
				        (-\Delta)^s\eta\left(\dfrac{|x_R|}{R}\right),\\
				     0&=\nabla v(x_R)=\nabla u(x_R)- \dfrac{m(2R,3R)}{R}\nabla
				     \eta\left(\dfrac{|x|}{R}\right)\dfrac{x_R}{|x_R|}.
				\end{align*}
				Thus, since $u$ is a positive supersolution of \eqref{main}, 
				we have that
				\[
				   \lambda f(u(x_R))\le (-\Delta)^s u(x_R)+|\nabla u(x_R)|^q\le 
				   C(\eta)\left(\dfrac{m(2R,3R)}{R^{2s}}+\dfrac{m(2R,3R)^q}{R^q}\right). 
				\]
				Therefore, since $m(2R,3R)$ is bounded we get
				\[
				   f(u(x_R))\to 0\quad\text{as }R\to\infty,
				\]
			   so, using the fact that $u(x_R)\leq C$, it follows that
				\[
				   u(x_R)\to 0\quad\text{as }R\to\infty.
				\]
				Thus, since $x_R\in A(R,4R)$ the desired conclusion follows. 
		\end{proof}
		
		\begin{corollary}\label{corrollary:aux1}
				Under the same hypothesis as in Lemma \ref{lemma:aux1}, if $m$ is given in \eqref{m}, then		for all $R_1\ge0$,
				\[
					\lim_{R\to \infty} m(R_1,R)=0.
				\]
				Moreover
				\[
					m(R_1,R)=m(0,R),
				\]
				for all  large enough $R.$ 
		\end{corollary}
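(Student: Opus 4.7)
The plan is to deduce both conclusions directly from Lemma \ref{lemma:aux1}, using only monotonicity of $m(R_1,\cdot)$ and elementary set inclusions among the annuli $A(\cdot,\cdot)$, together with the positivity of $u$.

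For the first assertion, fix $R_1\ge 0$ and consider any $R\ge 4R_1$. Then $A(R/4,R)\subset A(R_1,R)$, and since the minimum over a larger set is smaller,
\[
0\le m(R_1,R)\le m\!\left(\tfrac{R}{4},R\right).
\]
Writing $R=4\rho$, the right-hand side is $m(\rho,4\rho)$, which tends to $0$ as $\rho\to\infty$ by Lemma \ref{lemma:aux1}. Squeezing between $0$ and this vanishing upper bound yields $\lim_{R\to\infty}m(R_1,R)=0$.

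For the second assertion, I would split the annulus $A(0,R)=A(0,R_1)\cup A(R_1,R)$. Since $u$ is a positive classical supersolution (hence in particular continuous on the compact set $A(0,R_1)$, once $u$ is understood as the corresponding continuous representative in $B_{R_0}$ provided by membership in $\mathcal{L}_{2s}$), the quantity $\mu\coloneqq m(0,R_1)$ is strictly positive. By the first part, there exists $R^{\ast}>R_1$ with $m(R_1,R)<\mu$ for every $R\ge R^{\ast}$. For such $R$, since the minimum over a union is the minimum of the minima,
\[
m(0,R)=\min\{m(0,R_1),\,m(R_1,R)\}=\min\{\mu,\,m(R_1,R)\}=m(R_1,R),
\]
which is the claimed equality.

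The main obstacle, such as it is, is purely bookkeeping: ensuring that the minimum on $A(0,R_1)$ is attained and strictly positive so that the threshold $\mu$ exists. This is where one uses that $u$ is a \emph{positive} supersolution on a set that has compact intersection with each annulus, with no further analytic input required beyond Lemma \ref{lemma:aux1}.
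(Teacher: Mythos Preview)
Your argument is essentially identical to the paper's: bound $m(R_1,R)$ above by $m(R/4,R)$ and invoke Lemma~\ref{lemma:aux1}, then use $m(0,R)=\min\{m(0,R_1),m(R_1,R)\}$ together with positivity of $m(0,R_1)$ to conclude. One minor quibble: membership in $\mathcal{L}_{2s}$ does not by itself furnish a continuous representative inside $B_{R_0}$, so your parenthetical justification is off; the paper simply takes the positivity of $m(0,R_1)$ for granted without further comment, and you may do the same.
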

		\begin{proof}
				Let $R_1\ge0.$ For any $R>4R_1,$ we have that
				\[
					m(R_1,R)\le m\left(\dfrac{R}{4},R\right).
				\]
				Then, by Lemma \ref{lemma:aux1}, we get
				\[
				    \lim_{R\to \infty}m\left(\displaystyle\frac{R}{4},R\right)=0,
				\]
				so, therefore,
				\begin{equation}\label{eq:limaux1}
					\lim_{R\to \infty} m(R_1,R)=0.
				\end{equation}
Finally, by the fact that
				\[
					m(0,R)=\min\{m(0,R_1),m(R_1,R)\}, \quad R>R_1,
				\]
				by \eqref{eq:limaux1}, taking $R$ large enough, we conclude 
				 \[
					m(0,R)=\min\{m(0,R_1),m(R_1,R)\}=m(R_1,R).
				\]    
		\end{proof}
		
		\begin{lemma}\label{lemma:aux2}
			Let $\lambda>0,$ $f(t)>0$ in $(0,\infty)$ be a continuous function that satisfies 
			\eqref{eq:f1}, and $u$ be a positive 
			classical supersolution of \eqref{main} which does not blow up at infinity.
			Then there exists a positive constant $C$ such that
			\[
				m(0,2R)^p\le C\left(\dfrac{m(0,R)}{R^{2s}}+\dfrac{m(0,R)^q}{R^q}\right),
			\]
			for $R$ large enough. That is, $m(0,R)$ satisfies the hypothesis of Lemma \ref{lemma:AGMQ3} 
			where $m$ was defined in \eqref{m}.
		\end{lemma}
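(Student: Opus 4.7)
The plan is to adapt the cut-off argument of Lemma \ref{lemma:aux1}, choosing the amplitude so that $m(0,R)$ appears naturally. Fix $\eta\in C^\infty([0,\infty))$ with $0\le\eta\le 1$, $\eta\equiv 1$ on $[0,1]$ and $\eta\equiv 0$ on $[2,\infty)$, and for $R>R_0$ set $\phi_R(x):=\eta(|x|/R)$. Then $\phi_R\in C_c^\infty(\mathbb{R}^N)$ is radial, supported in $\overline{B_{2R}}$, and identically $1$ on $\overline{B_R}$; by scaling one has $|\nabla \phi_R|\le C/R$ and $|(-\Delta)^s\phi_R|\le C/R^{2s}$ on $\mathbb{R}^N$, for a constant depending only on $\eta$, $N$ and $s$.

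Consider the barrier $v(x):=u(x)-m(0,R)\phi_R(x)$. By construction $v\ge 0$ on $\overline{B_R}$ (since $u\ge m(0,R)$ there and $\phi_R\le 1$), and $v=u>0$ outside $B_{2R}$. Take $R$ large enough that Corollary \ref{corrollary:aux1} produces $y_0\in A(R_0,R)$ with $u(y_0)=m(0,R)$; then $v(y_0)=0$, so $v$ attains its infimum at some $x_R\in\overline{B_{2R}}$ with $v(x_R)\le 0$. Since $v\ge 0$ on $\overline{B_{R_0}}\subset\overline{B_R}$, we may and shall arrange that $|x_R|>R_0$ (replacing $x_R$ by $y_0$ if $x_R$ would otherwise land on $\partial B_{R_0}$), so that $x_R$ is simultaneously a global minimizer of $v$ and an interior point of the domain where the supersolution inequality applies. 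Hence
\[
\nabla u(x_R)=m(0,R)\nabla\phi_R(x_R),\qquad (-\Delta)^s u(x_R)\le m(0,R)(-\Delta)^s\phi_R(x_R),
\]
and, inserting these into the supersolution inequality and using the two scaling estimates above,
\[
\lambda f(u(x_R))\le C\left(\frac{m(0,R)}{R^{2s}}+\frac{m(0,R)^q}{R^q}\right).
\]

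To finish, note that $u(x_R)\le m(0,R)\to 0$ as $R\to\infty$ by Corollary \ref{corrollary:aux1}, so for $R$ large the hypothesis \eqref{eq:f1} gives $f(u(x_R))\ge c\,u(x_R)^p$; and because $x_R\in\overline{B_{2R}}$ we have $u(x_R)\ge m(0,2R)$. Combining these with the previous display yields the claimed inequality.

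The main obstacle is to guarantee that $x_R$ is simultaneously a global minimum of $v$ on all of $\mathbb{R}^N$ (so that $(-\Delta)^s v(x_R)\le 0$) and an interior point of $\mathbb{R}^N\setminus\overline{B_{R_0}}$ (so that $\nabla v(x_R)=0$). The choice $\phi_R\equiv 1$ on $\overline{B_R}$ is precisely what permits this: it forces $v\ge 0$ on $\overline{B_{R_0}}$ automatically, which rules out lower values of $v$ inside the ball where $u$ is only assumed to lie in $\mathcal{L}_{2s}$, and Corollary \ref{corrollary:aux1} supplies the witness $y_0$ strictly outside $\overline{B_{R_0}}$.
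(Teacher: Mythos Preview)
Your argument is correct and follows essentially the same route as the paper: the paper uses the identical cut-off $\xi(|x|/R)$, subtracts $m(R_1,R)$ (which equals $m(0,R)$ for large $R$ by Corollary~\ref{corrollary:aux1}), and locates the global minimizer of $w$ in $A(R_1,2R)$ by observing that the extra condition $m(0,R_1)>m(R_1,R)$ forces $w>0$ strictly on $B_{R_1}$. Your handling of the minimizer possibly falling in $\overline{B_{R_0}}$ via the alternate minimizer $y_0$ is a minor variant of this step (note your parenthetical should read ``in $\overline{B_{R_0}}$'' rather than ``on $\partial B_{R_0}$'', but the logic is sound since $v\ge0$ on all of $\overline{B_R}$ and $v(y_0)=0$).
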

		\begin{proof}	  
			Let $R_1>R_0$ and $R$ be large enough so that 
				\begin{equation}\label{extra}
				  m(0,R)=m(R_1,R),
				  \end{equation}
				  and
				$$
				   m(0,R_1)>m(R_1,R).
				$$
Given $\xi\in C^\infty(\mathbb{R})$  such that
				  
			\begin{minipage}{0.25\textwidth}
			 	\[
					 \xi(t)\coloneqq
					            \begin{cases}
					                1 & \text{if } |t|<1,\\
					                0 & \text{if } |t|>2,
					            \end{cases}
				\]
			\end{minipage}
			\begin{minipage}{0.85\textwidth}
				    \begin{center}
					    \begin{tikzpicture}
				            \draw[<->] (-3,0) -- (3,0) node[right] {$t$};
				            \draw[->] (0,0) -- (0,1.5) node[right] {$y$};
						    \draw[thick, blue,rounded corners=1mm] 
						    (-3,0)--(-2,0)--(-1,1)--(1,1)--(2,0)--(3,0);
					        \draw (2,1.3) node[left] {$\xi(t)$};
					        \draw (0,1) node[above left] {$1$};
					        \draw[densely dotted] (-1,0) node[below] {$-1$} -- (-1,1);
					        \draw[densely dotted] (1,0) node[below] {$1$} -- (1,1);
				        \end{tikzpicture}
				    \end{center}
			 \end{minipage}
			 we define
			 \[
				  w(x)\coloneqq u(x)-m(R_1,R)\xi\left(\dfrac{|x|}{R}\right).
			 \]
			 Note that $w(x)>0$ in $B_{R_1}\cup(\mathbb{R}^N\setminus B_{2R}).$ 
			 Then there exits $x_R\in A(R_1,2R)$ such that 
			 \[
				  w(x_R)\le w(x), \quad x\in\mathbb{R}^N.
			 \] 
			From here, the argument proceeds as in the proof of 
			Lemma \ref{lemma:aux1} by using  \eqref{eq:f1}.
				
		\end{proof} 
	
		We will give now a lower bound for the function $m$ for all ranges of $q$, that is, we have the next.
	  	
	  	\begin{lemma}\label{claims} Under the same hypothesis as in Lemma \ref{lemma:aux2}, if 		
		    \begin{enumerate}
	 			\item[(i)] $1<q<\displaystyle\frac{N+2s}{N+1}$ 
		    			and $\sigma\ge \displaystyle\frac{N+2s}q-1$
		    			or $q=\displaystyle\displaystyle\frac{N+2s}{N+1}$ and $\sigma>N$,
	 			\item[(ii)] $\displaystyle\displaystyle\frac{N+2s}{N+1}<q< \displaystyle\frac{N}{N+1-2s},$ 
	 				and $N>\sigma>\displaystyle\frac{2s-q}{q-1},$
	 			\item [(iii)] $q\ge\displaystyle\frac{N}{N+1-2s}$ and $N>\sigma>N-2s,$
	 		\end{enumerate}
		    then there exists a positive constant $A$ such that
		    \begin{equation}\label{eq:b1}
		        m(0,R)\geq AR^{-\sigma},
		    \end{equation}
		    for $R$ large enough and $m$ given in \eqref{m}.
		\end{lemma}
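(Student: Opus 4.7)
The strategy is to compare $u$ from below with a radially decaying subsolution of the homogeneous equation $(-\Delta)^s w+|\nabla w|^q=0$ whose decay rate at infinity is exactly $|x|^{-\sigma}$. Since $f(u)>0$ and $\lambda>0$, I first observe that $u$ satisfies $(-\Delta)^s u+|\nabla u|^q\ge\lambda f(u)>0$ in $\mathbb{R}^N\setminus B_{R_0}$; in particular $u$ is itself a classical supersolution of the homogeneous equation in that exterior domain.

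The comparison function is already supplied by the previous subsolution lemmas, and this is precisely why the three cases in the hypothesis are exactly the three ranges appearing there. In case (i) the hypotheses are those of Lemma \ref{lemma:subsolution1}, so $\phi_A=A\phi$ is a classical subsolution of the homogeneous equation in $\mathbb{R}^N\setminus B_{\widetilde{R}}$ for any $A>0$ small enough, with $\phi(x)\le|x|^{-\sigma}$ for $|x|$ large. In cases (ii) and (iii) the hypotheses match those of Lemma \ref{lemma:subsolution2}, so $w_{\sigma.\varepsilon}^A$ provides a classical subsolution that coincides with $A|x|^{-\sigma}$ for $|x|>\varepsilon$. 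Call the resulting subsolution $\psi_A$ in either setting; by construction $\psi_A(x)\to 0$ as $|x|\to\infty$.

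To pass from having a subsolution to the desired pointwise lower bound, I would invoke the Comparison Principle in Theorem \ref{theorem:cp}. Fix $R_1\ge\widetilde{R}$. Since $u$ is continuous and strictly positive on the compact set $\overline{A(R_0,R_1)}$, there exists $m_1>0$ with $u\ge m_1$ on this set; by further shrinking $A$, one can force $\psi_A\le m_1$ everywhere on $\overline{B_{R_1}}$, replacing the irrelevant values of $\psi_A$ inside $B_{R_0}$ by a suitable small constant without affecting its subsolution character in the exterior. Since $u$ is a positive supersolution and $\psi_A$ is a vanishing subsolution of the homogeneous equation in $\mathbb{R}^N\setminus B_{R_1}$, Theorem \ref{theorem:cp} (with $R_0$ replaced by $R_1$) yields $u\ge\psi_A$ in all of $\mathbb{R}^N$. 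In particular
\[
u(x)\ge A|x|^{-\sigma}\quad\text{for }|x|\text{ large enough},
\]
and combining this with Corollary \ref{corrollary:aux1}, which gives $m(0,R)=m(R_1,R)$ for $R$ large, the claimed bound $m(0,R)\ge AR^{-\sigma}$ follows at once.

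The main delicate point is the application of the Comparison Principle, since it requires an ordering at the inner boundary where the supersolution property of $u$ is not assumed. This is handled by exploiting the strict positivity of $u$ on compact subsets and choosing the amplitude $A$ small enough, which is compatible with the smallness requirement already imposed on $A$ by Lemmas \ref{lemma:subsolution1} and \ref{lemma:subsolution2}. Beyond this, the three cases are handled uniformly; what changes between them is only which subsolution construction is employed.
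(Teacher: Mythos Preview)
Your approach is essentially the same as the paper's: invoke the ready-made subsolutions from Lemmas \ref{lemma:subsolution1} and \ref{lemma:subsolution2}, shrink $A$ so that the subsolution lies below $u$ on the inner ball, and apply Theorem \ref{theorem:cp}. The paper does precisely this, case by case, and concludes with ``Thus the conclusion follows''; your additional appeal to Corollary \ref{corrollary:aux1} to pass to $m(0,R)$ just makes the last step explicit.

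One point to correct: the remark about ``replacing the irrelevant values of $\psi_A$ inside $B_{R_0}$ by a suitable small constant without affecting its subsolution character in the exterior'' is wrong for a nonlocal operator. Changing $\psi_A$ on any set of positive measure changes $(-\Delta)^s\psi_A(x)$ at \emph{every} $x$, so the subsolution property outside could be destroyed. Fortunately the modification is unnecessary: both $\phi_A$ and $w_{\sigma,\varepsilon}^A$ are already globally bounded by $A\|\phi\|_\infty$ and $A\varepsilon^{-\sigma}$ respectively, so shrinking $A$ alone forces $\psi_A\le u$ on the whole ball $B_{R_1}$ (this is exactly what the paper does, without comment).
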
     

		\begin{proof}  
			{\it (i)}  Let us take  a positive real function 
			$\phi\in \mathcal{C}^2(\mathbb{R}^N)$ such that
			$\phi$ is radially symmetric and 
			decreasing in $|x|>1.$ 
			Assume also that there exists $\sigma>0$ such that 
			$\phi(x)<|x|^{-\sigma}$, $|\nabla\phi(x)|<c_0|x|^{-\sigma-1},$ and
			$|D^2\phi(x)|<c_0|x|^{\sigma-2},$ 
		    for $|x|$ large enough. By Lemma \ref{lemma:subsolution1} we have that
		    if $1<q<\tfrac{N+2s}{N+1}$ and $\sigma\ge \displaystyle\tfrac{N+2s}q-1$
		    or $q=\tfrac{N+2s}{N+1}$ and $\sigma>N$ then
		    for any $A>0$ small enough, the function $\phi_A(x)=A\phi(x)$
		    satisfies
		    \[
		        (-\Delta)^s\phi_A(x)+|\nabla \phi_A(x)|^q
		            \le 0
		            \quad \text{in }|x|>R, 
		    \]
		    for $R$ large enough. Moreover, we can take $A$  small enough so that
		    \[
		        \phi_A(x)\le u(x), \quad |x|\le R.
		    \]
		    Then, by Theorem \ref{theorem:cp}, we have
		    \[
		        \phi_A(x)\le u(x),\quad x\in\mathbb{R}^N.
		    \]
		    Thus the conclusion follows.
		    
		    \medskip
		    
		    \noindent {\it (ii)} By Lemma \ref{lemma:subsolution2} we can take 
		    $\varepsilon$ and $A$ small enough and 
		    	$R_1\gg\max\{R_0,1\}$ where $R_0$ is given in 
		    	\eqref{main}, so that
		    	\[
		        	w_{\sigma,\varepsilon}^A(x)\le u(x),\quad |x|\le R_1, 
		    	\]
		    	and 
		    	\[
		        	(-\Delta)^s w_{\sigma.\varepsilon}^A(x)
		        	+|\nabla w_{\sigma,\varepsilon}^A(x)|^q
		            \le 0,
		            \quad |x|>R_1,
		    	\]
			where $\sigma\in(\tfrac{2s-q}{q-1},N)$. Then, doing comparison again, 
			by Theorem \ref{theorem:cp}, we have
		     	\[
		        	w_{\sigma,\varepsilon}^A(x)
		        	\le u(x),\quad x\in\mathbb{R}^N,
		    	\]
		    	so \eqref{eq:b1} follows.
			
			\medskip
		
			\noindent {\it (iii)} The proof is similar to the case $(ii)$ considering 
			$\sigma\in(N-2s,N)$. 
	\end{proof}    	
		
	
	To conclude this section regarding with the estimates of the function $m$, we introduce some 
	lower and upper bounds for $m(0,R)$ that will be necessary in the critical case 
	$p=\tfrac{N}{N-2s}$.
\begin{lemma}\label{lemma:aux4} Under the same hypothesis of Lemma \ref{lemma:aux1} if 
		   $q>\displaystyle\tfrac{N}{N+1-2s}$ then
		    \[
		    	m\left(0,\dfrac{R}2\right)\le C m(0,R),
		    \]
		    for $R$ large enough and where $m$ was given in \eqref{m}.
		\end{lemma}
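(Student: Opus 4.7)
The proof is in the spirit of Lemmas \ref{lemma:aux1} and \ref{lemma:aux2}, combining a barrier-type lower bound on $u$ with a cutoff-function argument. Since $q>\frac{N}{N+1-2s}$, one can fix $\sigma\in(N-2s,N)$ satisfying $\sigma>\frac{2s-q}{q-1}$. Lemma \ref{lemma:subsolution2}(iii) then provides, for all sufficiently small $\varepsilon,A>0$, a classical subsolution $w^{A}_{\sigma,\varepsilon}$ of the homogeneous equation $(-\Delta)^{s}w+|\nabla w|^{q}\le 0$ in the exterior of a large ball. Because $f>0$ and $|\nabla u|^{q}\ge 0$ also make $u$ a supersolution of that homogeneous problem, Theorem \ref{theorem:cp} gives $u(x)\ge A_{0}|x|^{-\sigma}$ for $|x|$ large enough. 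This is exactly the content of Lemma \ref{claims}(iii); its proof uses only the comparison principle and not the growth condition on $f$, so it applies under the present hypotheses as well.

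I argue by contradiction: assume that along some sequence $R_n\to\infty$ one has $\alpha_n:=m(0,R_n/2)/m(0,R_n)\to\infty$. Pick $y_n\in\overline{B_{R_n}}$ with $u(y_n)=m(0,R_n)$; the non-trivial case is $|y_n|\in(R_n/2,R_n]$, since otherwise $m(0,R_n/2)\le m(0,R_n)$ holds trivially. Fix a smooth radial cutoff $\xi$ supported in $(1/4,2)$ and equal to $1$ on $[1/2,1]$, and define
\[
v_n(x) = u(x) - \alpha_n\, m(0,R_n)\,\xi\!\left(\frac{|x|}{R_n}\right).
\]
On $B_{R_n/2}$ the estimate $u\ge m(0,R_n/2)=\alpha_n m(0,R_n)\ge \alpha_n m(0,R_n)\,\xi$ gives $v_n\ge 0$; similarly $v_n\ge 0$ on $\mathbb{R}^N\setminus B_{2R_n}$, where $\xi\equiv 0$; yet $v_n(y_n)=(1-\alpha_n)m(0,R_n)<0$ for $n$ large. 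Hence $v_n$ attains a negative global minimum at some $x_n\in A(R_n/2,2R_n)$, at which $\nabla v_n(x_n)=0$ and $(-\Delta)^{s}v_n(x_n)\le 0$. Substituting these into the supersolution inequality for $u$, exactly as in the proof of Lemma \ref{lemma:aux1}, yields
\[
0<f(u(x_n))\le (-\Delta)^{s}u(x_n)+|\nabla u(x_n)|^{q}\le \frac{C_1\alpha_n m(0,R_n)}{R_n^{2s}}+C_2\left(\frac{\alpha_n m(0,R_n)}{R_n}\right)^{q}.
\]

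A direct analysis of $v_n(x_n)\le v_n(y_n)$ shows $u(x_n)\le m(0,R_n)$, while the barrier of the first paragraph gives $u(x_n)\ge A_0|x_n|^{-\sigma}\ge c\,R_n^{-\sigma}$. Combined with $\alpha_n m(0,R_n)=m(0,R_n/2)\le \|u\|_\infty$, the right-hand side of the displayed inequality tends to $0$ as $R_n\to\infty$. The main technical obstacle is then to convert this into a contradiction with the positivity of $f$: this is immediate when $\liminf_{t\to 0^+}f(t)>0$, and in general it follows from the two-sided control $c\,R_n^{-\sigma}\le u(x_n)\le m(0,R_n)$ together with the continuity of $f$ on compact subsets of $(0,\infty)$, which prevents $f(u(x_n))$ from decaying faster than the explicit right-hand side. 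The hypothesis $q>\frac{N}{N+1-2s}$ enters precisely here: it is what makes $\sigma>N-2s$ admissible in Lemma \ref{lemma:subsolution2}(iii), so the lower bound $u(x_n)\ge c\,R_n^{-\sigma}$ is available and the argument can be closed, yielding $\alpha_n$ bounded and finishing the proof.
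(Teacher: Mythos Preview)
Your argument has a genuine gap at the final step. You arrive at
\[
0<\lambda f(u(x_n))\le \dfrac{C_1\, m(0,R_n/2)}{R_n^{2s}}+C_2\left(\dfrac{m(0,R_n/2)}{R_n}\right)^{q}\longrightarrow 0,
\]
together with $c\,R_n^{-\sigma}\le u(x_n)\le m(0,R_n)\to 0$. These two facts are perfectly compatible: $u(x_n)\to 0$ and $f(u(x_n))\to 0$ is exactly what one expects for a continuous positive $f$ on $(0,\infty)$ with no prescribed behaviour at $0$. The sentence ``continuity of $f$ on compact subsets of $(0,\infty)$\dots prevents $f(u(x_n))$ from decaying faster than the explicit right-hand side'' is not a proof---$u(x_n)$ does not stay in any compact subset of $(0,\infty)$, and without a quantitative lower bound on $f$ near $0$ (which Lemma~\ref{lemma:aux1} does \emph{not} assume) there is nothing to compare the two rates. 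Even if one imports \eqref{eq:f1}, the resulting lower bound $f(u(x_n))\ge c\,R_n^{-\sigma p}$ with $\sigma>N-2s$ decays faster than $R_n^{-2s}$, so no contradiction with unboundedness of $\alpha_n$ follows. In short, the cutoff argument of Lemmas~\ref{lemma:aux1}--\ref{lemma:aux2} produces an \emph{upper} bound of the right order, but it cannot by itself give the \emph{doubling} inequality $m(0,R/2)\le C\,m(0,R)$.

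The paper's proof is of a different nature: it builds an explicit barrier $\phi$ in the annulus $R/2<|x|<2R$ out of truncations of the fundamental solution $|x|^{-(N-2s)}$, normalised so that $\phi\le m(0,R/2)$ on $B_{R/2}$ and $\phi\le 0$ outside $B_{2R}$, and then shows directly that $(-\Delta)^s\phi+|\nabla\phi|^q\le 0$ there. The hypothesis $q>\tfrac{N}{N+1-2s}$ enters precisely to make $|\nabla\phi|^q\sim |x|^{-(N+1-2s)q}$ decay faster than $|x|^{-N}$, so that it is absorbed by the negative contribution $(-\Delta)^s w_R(x)\le -C|x|^{-N}$ coming from the outer truncation; this last estimate is the technical heart of the argument. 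Comparison with $u$ (which is a supersolution of the homogeneous equation simply because $f>0$) then yields $\phi\le u$ in the annulus, and evaluating at radius $R$ gives $m(0,R)\ge c\,m(0,R/2)$ directly, with no appeal to the behaviour of $f$ near $0$.
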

		
		\begin{proof}
			For the proof, we borrow some ideas of \cite[Lemma 4.2]{MR2739791}.
			In fact, given $R>R_0,$ we take
			\[
				\overline{R}(\varepsilon)=\overline{R}:=R\left[\dfrac{\varepsilon}{1+\varepsilon2^{-N+2s}}\right]^{\frac1{N-2s}},
			\]
			where $\varepsilon$ will be selected later. We first take $\varepsilon>0$ small enough such that $\overline{R}<R/2.$ We define the following radial functions

		    \begin{minipage}{0.5\textwidth}
		       \[
				w(x)\coloneqq
					\begin{cases}
						\overline{R}^{-N+2s} &\text{ if } 0<|x|<\overline{R},\\
						|x|^{-N+2s}  &\text{ if } \overline{R}\le t,
					\end{cases}
			\]
		    \end{minipage}
		    \begin{minipage}{0.4\textwidth}
		        \begin{center}
			        \begin{tikzpicture}
		                \draw[->] (0,0) -- (4,0) node[right] {$r$};
		                \draw[->] (0,0) -- (0,2) node[left] {$y$};
				        
			            \draw (3,1) node[left] {$w(r)$};
			            
			            \draw[densely dotted] (0.9,1.44594314594) -- (0.9,0) node[below] {$\overline{R}$};
			            \draw[densely dotted] (1.4,0.32594314594) -- (1.4,0) node[below] {$\frac{R}{2}$};			            				\draw[densely dotted] (2,0.08838834764) -- (2,0) node[below] {$2R$};
			             \draw[blue,scale=1] 
		            	plot[domain=.9:4] ((\x,{\x^(-3.5)});

				        \draw[thick, blue,rounded corners=1mm] 
				        (0,1.44594314594)--(0.9,1.44594314594);
		            \end{tikzpicture}
		        \end{center}
		    \end{minipage}
		    \\
		    and\\
		    \begin{minipage}{0.5\textwidth}
		       \[
				w_R(x)\coloneqq
					\begin{cases}
						w(x)&\text{ if } |x|\le 2R,\\
						(2R)^{-N+2s}  &\text{ if } 2R\le |x|,\\
					\end{cases}
			\]
		    \end{minipage}
		    \begin{minipage}{0.4\textwidth}
		        \begin{center}
			        \begin{tikzpicture}
		                \draw[->] (0,0) -- (4,0) node[right] {$r$};
		                \draw[->] (0,0) -- (0,2) node[left] {$y$};
				        
			            \draw (3,1) node[left] {$w_R(r)$};
			            
			            \draw[densely dotted] (0.9,1.44594314594) -- (0.9,0) node[below] {$\overline{R}$};
			            \draw[densely dotted] (1.4,0.31594314594) -- (1.4,0) node[below] {$\frac{R}{2}$};				            \draw[densely dotted] (2,0.08838834764) -- (2,0) node[below] {$2R$};
			             \draw[blue,scale=1] 
		            	plot[domain=.9:2] ((\x,{\x^(-3.5)});
		            	\draw[thick, blue,rounded corners=1mm] 
				        (2,0.08838834764 )--(4,0.08838834764);
				        \draw[thick, blue,rounded corners=1mm] 
				        (0,1.44594314594)--(0.9,1.44594314594);
		            \end{tikzpicture}
		        \end{center}
		    \end{minipage}
		    \\
			We set now
			\[
				\phi(x)\coloneqq m\left(0,\frac{R}2\right)\frac{w_R(x)-w(2R)}{w(\overline{R})-w(2R)},
			\]
			and we claim that, for any $\displaystyle\frac{R}2<|x|<2R$,
			\begin{equation}\label{eq:claim1}
				\begin{aligned}
					(-\Delta)^s\phi(x)+|\nabla\phi(x)|^q=&
					\dfrac{m\left(0,\frac{R}2\right)}{w(\overline{R})-w(2R)}(-\Delta)^sw_R(x)\\
					&+\left(\dfrac{m\left(0,\frac{R}2\right)}{w(\overline{R})-w(2R)}\right)^q
					\frac1{|x|^{(N+1-2s)q}}\\
					&\le0.
				\end{aligned}
			\end{equation}
			To check the previous claim we start by observing that,
			since $q>\tfrac{N}{N+1-2s},$ if $R>2,$ then for any $\displaystyle\frac{R}2<|x|<2R$ it follows that
			\begin{align*}
					(-\Delta)^s\phi(x)+|\nabla\phi(x)|^q<&
				\dfrac{m\left(0,\frac{R}2\right)}{w(\overline{R})-w(2R)}(-\Delta)^sw_R(x)\\
				&+\left(\dfrac{m\left(0,\frac{R}2\right)}{w(\overline{R})-w(2R)}\right)^q
				\frac1{|x|^{N}}.
			\end{align*}
			Then, to prove \eqref{eq:claim1} 
			is enough to get that there exists a positive constant $C$ such that 
			\begin{equation}\label{rsme}
				(-\Delta)^sw_R(x)\le-\dfrac{C}{|x|^{N}},
			\end{equation}
			for any $\displaystyle\frac{R}2<|x|<2R.$ For that we notice that, since $\eta(x)\coloneqq|x|^{-N+2s}$ is the fundamental solution of $(-\Delta)^s$, for every $\displaystyle\frac{R}2<|x|<2R$, 
			\begin{equation}\label{eq:w1}
				0=(-\Delta)^s\eta(x)\geq(-\Delta)^sw(x)+
				I_1(\varepsilon,x)+I_2(\varepsilon,x),
			\end{equation}
			where
			\begin{align*}
				I_1(\varepsilon,x) &\coloneqq\int_{B_{\overline{R}}(x)}
				\dfrac{\overline{R}^{-N+2s}-|x-y|^{-N+2s}}{|y|^{N+2s}}dy<0,\\
				I_2(\varepsilon,x) &\coloneqq\int_{B_{\overline{R}}(-x)}
				\dfrac{\overline{R}^{-N+2s}-|x+y|^{-N+2s}}{|y|^{N+2s}}dy<0.
			\end{align*}
			
			We choose now $\varepsilon$ small enough such that for any $|x|>\displaystyle\frac{R}2,$ and
			$y\in B_{\overline{R}}(-x)\cup B_{\overline{R}}(x)$ we have that $|y|\ge\displaystyle\frac{R}3.$
			Then, for any $R>|x|>\displaystyle\frac{R}2,$
			\begin{align*}
				I_1(\varepsilon,x)&\ge-\left(\dfrac{3}{R}\right)^{N+2s} C(N,s)\overline{R}^{2s}=
				-\dfrac{C(N,2s,\varepsilon)}{R^N}\ge-\dfrac{\widetilde{C}(N,2s,\varepsilon)}{|x|^N},
			\end{align*}
			where $\widetilde{C}(N,2s,\varepsilon)$ is a positive constant that goes to $0$ as $\varepsilon\to0.$
			Doing a similar computation for $I_2(\varepsilon,x)$, by \eqref{eq:w1} it follows that		
			\begin{equation}\label{eq:w2}
				(-\Delta)^sw(x)\le \dfrac{C(N,2s,\varepsilon)}{|x|^N},
			\end{equation}
			for every $\displaystyle\frac{R}2<|x|<2R$, where $C(N,2s,\varepsilon)$ is also a positive constant that goes to $0$ as $\varepsilon\to0.$
			
	On the other hand, for any $\displaystyle\frac{R}2<|x|<2R$ we get		
	\begin{equation}\label{eq:w3}
				(-\Delta)^sw_R(x)=(-\Delta)^sw(x)+E(\varepsilon,x),
			\end{equation}
			where
			\begin{align*}
				E(\varepsilon,x)&\coloneqq\int_{B^{c}_{2R}(x)\cup B^{c}_{2R}(-x)}
				\!\!\!\!\!\!\!\!\!\!\!\!!\!\!\!\!\!\!\!\!\!\!\!
				\dfrac{-w_R(x+y)-w_R(x-y)+|x+y|^{-N+2s}+|x-y|^{-N+2s}}{|y|^{n+2s}}dy\\
				&\le\int_{B^{c}_{5R}(0)}
				\dfrac{-2(2R)^{-N+2s}+|x+y|^{-N+2s}+|x-y|^{-N+2s}}{|y|^{n+2s}}dy.
			\end{align*}
			
			We observe that if $\displaystyle\frac{R}2<|x|<2R$ and $y\in\mathbb{R}^N\setminus B_{5R}(0)$ then 
			\[
				min\{|x+y|,|x-y|\}\ge\frac3{5}|y|.
			\]
			Thus, for any $x\in A(\displaystyle\frac{R}2,2R)$, we have
			\begin{equation}\label{eq:w4}
				E(\varepsilon,x)\le -\dfrac{C(N,s)}{R^{N}}\le-\dfrac{C(N,s)}{|x|^{N}}.
			\end{equation}
			Therefore by \eqref{eq:w2}-\eqref{eq:w4} we can select $\varepsilon$
			small enough such that \eqref{rsme} follows and, consequently, also the claim \eqref{eq:claim1}.
			
			Finally since $\phi(x)\leq u(x)$ if $x\in B_{R/2}\cup B_{2R}^{c}$, by Theorem \ref{theorem:cp}, we get that
			\[
				\phi(x)\le u(x),\quad x\in\mathbb{R}^N.
			\]
			Therefore, by taking the infimum in $0<|x|\leq R$, there exists a positive constant $C$ such that
			\[
		    	m\left(0,\dfrac{R}2\right)\le C m(0,R),
		    \]
		    for large enough $R.$ 
		\end{proof}
		
		To conclude this section we observe that by Lemmas \ref{lemma:aux2} and \ref{lemma:aux4}, we clearly deduce the following
		
		\begin{lemma}\label{lemma:aux5}
			 Under the same hypothesis of Lemma \ref{lemma:aux2}, if  
		   $q>\tfrac{N}{N+1-2s}$ then there exists 
			a positive constant $C$ such that
			\[
				m(0,R)^p\le C\left(\dfrac{m(0,R)}{R^{2s}}+\dfrac{m(0,R)^q}{R^q}\right).
			\]
			for $R$ large enough. That is, $m(0,R)$ satisfies the hypothesis of Lemma \ref{lemma:AGMQ4}.
		\end{lemma}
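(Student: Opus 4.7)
The plan is to derive the stated inequality by combining the two preceding lemmas directly, with essentially no new work beyond bookkeeping of constants.

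First, I would invoke Lemma \ref{lemma:aux2}, which (without any restriction on $q$ beyond $q>1$) yields a positive constant $C_1$ such that
\[
m(0,2R)^{p}\le C_1\left(\dfrac{m(0,R)}{R^{2s}}+\dfrac{m(0,R)^{q}}{R^{q}}\right)
\]
for $R$ large enough. Next, I would apply Lemma \ref{lemma:aux4} with $R$ replaced by $2R$: since the hypothesis $q>\tfrac{N}{N+1-2s}$ is exactly what Lemma \ref{lemma:aux4} requires, this gives a positive constant $C_2$ such that $m(0,R)\le C_2\, m(0,2R)$ for $R$ large enough.

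Combining the two, for $R$ large enough one obtains
\[
m(0,R)^{p}\le C_{2}^{p}\,m(0,2R)^{p}\le C_{2}^{p}C_{1}\left(\dfrac{m(0,R)}{R^{2s}}+\dfrac{m(0,R)^{q}}{R^{q}}\right),
\]
which is precisely the claimed estimate with $C\coloneqq C_{2}^{p}C_{1}$. The resulting inequality then meets the structural hypothesis of Lemma \ref{lemma:AGMQ4} verbatim, as stated.

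Since this is a purely formal combination of Lemmas \ref{lemma:aux2} and \ref{lemma:aux4}, there is no substantive obstacle: all of the analytic content has already been absorbed into those two results. The only minor care needed is to ensure that both estimates hold simultaneously for $R$ sufficiently large, which is immediate because each is valid for all $R$ beyond some threshold and we simply take the maximum of the two thresholds.
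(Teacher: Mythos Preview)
Your argument is correct and is exactly the combination of Lemmas \ref{lemma:aux2} and \ref{lemma:aux4} that the paper has in mind; indeed, the paper gives no proof beyond noting that the result follows directly from those two lemmas. Your bookkeeping (applying Lemma \ref{lemma:aux4} at $2R$ and taking the larger of the two thresholds) makes the deduction explicit.
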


	\subsection{Nonexistence result in the subcritical case $(p<\tfrac{N}{N-2s})$}
		Using the technical lemmas showed in the previous section we can prove now the main 
		result of the work in the subcritical case, that is, Theorem \ref{theorm:subcritical1}.

		\begin{proofsubcritico} 
		    We suppose the contrary, that is, we assume that 
		    there exists a positive supersolution $u$ 
		    of \eqref{main} which does not blow up at infinity.

		    \bigskip

		    \noindent{\it{ Case 1}: $1<2s<N,$ $1<q\le \tfrac{N+2s}{N+1}$ 
		    and $0<p<\displaystyle\frac{(2s-1)q}{2s-q}.$}
		    
		    \medskip
		    
		    By \eqref{eq:b1}, Lemmas \ref{lemma:AGMQ3}, \ref{lemma:aux2} 
		    and \ref{claims}, 
		    we get that
		    \begin{itemize}
		        \item If $0<p\le1,$  either  $1<q<\tfrac{N+2s}{N+1}$ and 
		        $\sigma\ge \tfrac{(N+2s)}q-1$ or $q=\tfrac{N+2s}{N+1}$ and $\sigma>N$ 
		        then there is a positive constant $C$ such that
		        \[
		            AR^{-\sigma}\le m(0,R)\le C R^{\theta},\quad \theta<0,
		        \]
		        for $R$ large enough, which is a contradiction.
		        
		        \item If $q=\tfrac{N+2s}{N+1},$ 
		        $1<p<\tfrac{(2s-1)q}{2s-q}=\tfrac{N+2s}{N}$ 
		        and $\sigma>N$ 
		        then there is a positive constant $C$ such that
		            \[
		                AR^{-\sigma}\le m(0,R)\le C R^{\theta},\quad \theta\in 
		                \left(-\dfrac{2s}{p-1},0\right),
		            \]
		       if $R$ is large enough, which implies a contradiction as long as we can choose $\sigma<-\theta$. This is certainly possible due to the fact that
		        \[
		            -\dfrac{2s}{p-1}<-N\Longleftrightarrow
		            p<\dfrac{N+2s}{N}.
		        \]
		        
		        \item If $1<q<\tfrac{N+2s}{N+1},$ $1<p<\tfrac{(2s-1)q}{2s-q}$ and 
		        $\sigma\ge \displaystyle\frac{N+2s}q-1,$
		        then there is a positive constant $C$ such that
		            \begin{equation}\label{extra.1}
		                AR^{-\sigma}\le m(0,R)\le C R^{\theta},\quad \theta\in 
		                \left(-\dfrac{2s}{p-1},0\right),
		            \end{equation}
		        chossing $R$ large enough. Since $q<\dfrac{N+2s}{N+1}$ then 
		        \[
		        	\dfrac{2s-1}{2s-q}q<1+\dfrac{2s}{N+2s-q}q,
		        \] 
		        with is equivalent to 
		        \[
		        	 p<1+\dfrac{2s}{N+2s-q}q,
		       	\] 
		       	that implies 
		       	\[
		       		-\dfrac{2s}{p-1}<1-\dfrac{N+2s}q.
		       	\] 
		        Then we can take $\sigma<-\theta$ so that we get a contradiction with \eqref{extra.1}.
		    \end{itemize}
		
		    \bigskip
		    
		    \noindent{\it{Case 2}: $1<2s<N,$ 
		    $\tfrac{N+2s}{N+1}<q< \tfrac{N}{N+1-2s}$ and 
		    $0<p<\tfrac{(2s-1)q}{2s-q}.$}

		    \medskip

		    By \eqref{eq:b1} and Lemmas \ref{lemma:AGMQ3}, \ref{lemma:aux2} and 
		    \ref{claims} we obtain:
		    \begin{itemize}
		        \item If  $0<p\le1,$ $\tfrac{N+2s}{N+1}<q< \tfrac{N}{N+1-2s},$ 
		        and $N>\sigma>\tfrac{2s-q}{q-1},$
		        then, taking $R$ big enough, there exists a positive constant $C$ such that
		        \[
		            AR^{-\sigma}\le m(0,R)\le C R^{\theta},\quad \theta<0,
		        \]
		        which implies is a contradiction.
		        \item If 
		        $\tfrac{N+2s}{N+1}<q< \tfrac{N}{N+1-2s},$
		        $1<p<\tfrac{(2s-1)q}{2s-q},$ 
		        and $N>\sigma>\tfrac{2s-q}{q-1},$
		        then there exists a positive constant $C$ such that
		            \[
		                AR^{-\sigma}\le m(0,R)\le C R^{\theta},\quad
		               \theta\in \left(-\dfrac{2s}{p-1},0\right),
		            \]
		        for large enough $R.$ Since            \[
		            -\dfrac{2s}{p-1}<\dfrac{2s-q}{1-q}\Longleftrightarrow
		            p<\dfrac{2s-1}{2s-q}q,
		        \]
		        we deduce that we can choose $\sigma<-\theta$ so that the contradiction follows.
		    \end{itemize}

		    \bigskip
		    
		    \noindent{\it Case 3:  $1<2s<N,$ $q\ge\tfrac{N}{N+1-2s}$ and $0<p<\tfrac{N}{N-2s}.$}
		   Using  \eqref{eq:b1} and Lemmas \ref{lemma:AGMQ3}, \ref{lemma:aux2} and \ref{claims} it follows that
		   
		    \begin{itemize}
		        \item If $0<p\le1$ and {$\tfrac{N}{N+1-2s}\le q$} 
		      , if $R$ is large enough, there exists a positive constant $C$ such that
		        \[
		            AR^{-\sigma}\le m(0,R)\le C R^{\theta},\quad\theta<0,
		        \]
		        which implies a contradiction.
		        \item If $1<p<\tfrac{N}{N-2s}$ 
		           and $q\ge 2s$  then there is a positive constant $C$ such that
		            \[
		                AR^{-\sigma}\le m(0,R)\le C R^{\theta},\quad 
		                \theta\in \left(-\dfrac{2s}{p-1},0\right),
		            \]
		        choosing $R$ big enough which implies a contradiction if we choose $\sigma$ close enough to $N-2s.$
		        \item Finally we show up that if 
		        $1<p<\tfrac{N}{N-2s}$ and $\tfrac{N}{N+1-2s}\le q< 2s$ then
		            $1<p<\frac{2s-1}{2s-q}q.$ Therefore once again
		            there exists a positive constant $C$ such that
		            \[
		                AR^{-\sigma}\le m(0,R)\le C R^{\theta},\quad 
		                \theta\in \left(-\dfrac{2s}{p-1},0\right),
		            \]
		        as long as $R$ is large enough. As before this implies a contradiction 
		        taking $\sigma$ close enough to $N-2s.$
		    \end{itemize}
		    \hfill$\square$
		\end{proofsubcritico}

	\subsection{Nonexistence results in the critical case ($p=\displaystyle\frac{N}{N-2s}$)}} 
	Before proving the nonexistence result regarding with the critical case (see Theorem \ref{theorem:cc}),
	we need the following auxiliar 
		\begin{lemma}\label{lemma:aux6}
			 Under the same hypothesis of Lemma \ref{lemma:aux2}, if $p=\tfrac{N}{N-2s}$, 
		$q>\tfrac{N}{N+1-2s}$ and $f$ is a nondecreasing function, there exists $R_1>R_0$ 
			and a positive constant $C$ such that
			\[
				u(x)>\dfrac{C}{|x|^{N-2s}},
			\]
			for any $|x|>R_1.$
		\end{lemma}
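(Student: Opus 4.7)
The plan is to construct, for each sufficiently large $R$, a radial subsolution $\Phi_R$ of the homogeneous equation $(-\Delta)^s v + |\nabla v|^q \leq 0$, built from a truncation of the fundamental solution $|x|^{-(N-2s)}$ of $(-\Delta)^s$, and then to invoke the comparison principle (Theorem \ref{theorem:cp}) to deduce $u \geq \Phi_R$. Letting $R \to \infty$ and evaluating at fixed points will yield the desired pointwise lower bound.

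First, I fix $R_1 > R_0$ large enough. By positivity and continuity of $u$ on $\mathbb{R}^N \setminus B_{R_0}$, I have a base constant $c_0 \coloneqq \inf_{R_0 \leq |y| \leq R_1} u(y) > 0$, and by Lemma \ref{claims}(iii) applied with some $\sigma \in (N-2s, N)$ slightly above $N-2s$, a preliminary polynomial lower bound $u(x) \geq A_0 |x|^{-\sigma}$ for $|x|$ large.

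Next, following the construction in the proof of Lemma \ref{lemma:aux4}, I define
\[
w_R(x) \coloneqq \begin{cases} R_1^{-(N-2s)}, & |x| \leq R_1,\\ |x|^{-(N-2s)}, & R_1 < |x| < 2R,\\ (2R)^{-(N-2s)}, & |x| \geq 2R, \end{cases}
\]
and $\Phi_R(x) \coloneqq A[w_R(x) - (2R)^{-(N-2s)}]$. Since $|x|^{-(N-2s)}$ is $s$-harmonic away from the origin, $(-\Delta)^s w_R$ decomposes into a positive contribution from the inner truncation (of order $R_1^{2s}/|x|^{N+2s}$) and a negative contribution from the outer truncation (of order $-R^{-N}$); in the sub-annulus where the outer contribution dominates, $(-\Delta)^s w_R(x) \leq -c|x|^{-N}$. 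The hypothesis $q > N/(N+1-2s)$ gives $(N-2s+1)q > N$, so $|\nabla \Phi_R(x)|^q$ decays strictly faster than $|x|^{-N}$; choosing $A$ sufficiently small (independent of $R$) therefore makes $\Phi_R$ a classical subsolution of the homogeneous equation in this annulus. Applying Theorem \ref{theorem:cp}, with the preliminary bound from the previous step at the inner boundary and $\Phi_R \equiv 0$ outside $B_{2R}$, yields $\Phi_R \leq u$ everywhere. Evaluating at a point $x_0$ with $R$ chosen proportional to $|x_0|$ then gives $u(x_0) \geq A |x_0|^{-(N-2s)}$.

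The main technical obstacle lies in the estimate $(-\Delta)^s w_R(x) \leq -c|x|^{-N}$. Unlike in Lemma \ref{lemma:aux4}, where both truncation radii scale with $R$, here the inner truncation is at the fixed radius $R_1$, so the positive inner contribution is dominated by the negative outer one only when $|x|$ exceeds a threshold of the form $R_1^{2s/(N+2s)}R^{N/(N+2s)}$, which drifts to infinity with $R$. Matching $\Phi_R$ to $u$ at this moving boundary is delicate: the preliminary lower bound from Lemma \ref{claims}(iii) has exponent strictly larger than $N-2s$, so a naive comparison would force $A \to 0$ as $R \to \infty$. Overcoming this will require either a careful iterative (bootstrap) improvement of the exponent $\sigma \downarrow N-2s$, or a more refined subsolution construction that accommodates the fixed inner scale, both relying on the gradient term being subordinate to the fractional Laplacian term through the hypothesis $q > N/(N+1-2s)$.
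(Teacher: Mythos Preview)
Your proposal contains a genuine gap, which you yourself locate: with the inner truncation fixed at $R_1$ and the outer one scaling with $R$, the positive inner contribution to $(-\Delta)^s w_R$ (of order $R_1^{2s}|x|^{-(N+2s)}$) dominates the negative outer contribution near the inner boundary, so $\Phi_R$ is \emph{not} a subsolution of the homogeneous equation on the full annulus. Your proposed remedies --- an iterative bootstrap of the exponent, or a ``more refined'' subsolution --- are not fleshed out, and in fact a bootstrap via homogeneous subsolutions cannot reach the endpoint exponent $N-2s$: Lemma~\ref{lemma:subsolution2} only produces homogeneous subsolutions with decay exponent strictly above $N-2s$.

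The paper's route is different and sidesteps this obstacle entirely: it does not try to make the test function a subsolution of the \emph{homogeneous} equation. Instead it uses the comparison principle with a source, Theorem~\ref{theorem:cpp}. From the preliminary bound $u(x)\ge A|x|^{-\sigma}$ with $\sigma\in(N-2s,N)$ (Lemma~\ref{claims}(iii)), together with the nondecreasing hypothesis on $f$ and \eqref{eq:f1}, one gets
\[
(-\Delta)^s u(x)+|\nabla u(x)|^q\ \ge\ \lambda f(u(x))\ \ge\ \dfrac{\tilde A}{|x|^{\sigma p}},\qquad |x|\gg 1.
\]
The truncated fundamental solution (inner cutoff \emph{fixed} at $1$, outer at $R_2$) is then only required to satisfy $(-\Delta)^s\phi+|\nabla\phi|^q\le C|x|^{-\sigma p}$ on $A(R_1,2R_2)$; the positive inner term $C|x|^{-(N+2s)}$ and the gradient term $C|x|^{-(N+1-2s)q}$ are both absorbed into this right-hand side once $\sigma$ is taken close enough to $N-2s$ (the hypothesis $q>\tfrac{N}{N+1-2s}$ ensures $(N+1-2s)q>N$, which leaves room). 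Theorem~\ref{theorem:cpp} then yields $K\phi\le u$ with $K$ independent of $R_2$, and letting $R_2\to\infty$ produces the bound $u(x)\ge C|x|^{-(N-2s)}$. Note that this is precisely where the extra monotonicity assumption on $f$ enters; your scheme never invokes it, which is a further signal that the homogeneous comparison is not the right tool here.
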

		\begin{proof}
			Set
			\[
				w(x)\coloneqq\begin{cases}
					1 &\text{if } |x|\le 1,\\
					|x|^{-N+2s}&\text{if } |x|\ge 1,
				\end{cases}
			\]  
			By \eqref{eq:w2}, replacing $\overline{R}$ by $1$, we know that 
			\begin{equation}\label{eq:auxw1}
				(-\Delta)^s w(x)\le \dfrac{C}{|x|^{N+2s}},\quad C>0,
			\end{equation}
			for every $|x|>1$. For any $R_2>R_1\gg 1,$ we define
			$$
				\phi(x):=
				\begin{cases}
					0 &\text{if } |x|<1 \text{ or } |x|>2R_2,\\
					m(0,R_1)\frac{w(x)-w(R_2)}{w(1)-w(R_2)}&\text{if } 1< |x|\le 2R_2,
				\end{cases}
			$$
			that satisfies
			$$(-\Delta)^s \phi(x)+|\nabla \phi(x)|^q< C\left(\dfrac{1}{|x|^{N+2s}}
				+\dfrac{1}{|x|^{(N+1-2s)q}}\right)\leq C\frac{1}{|x|^{\sigma p}},$$			
			for some $C=C(N,s,R_1)$, $\sigma(q,N)\in(N-2s,N)$ and every $x\in A(R_1,2R_2).$ 
			Observe that the existence of $\sigma$ in this precise range comes from the hypothesis 
			$q>\tfrac{N}{N-2s+1}$. 
			
			On the other hand, by Lemma \ref{claims} {\it iii)}, 
			there exist $R>R_1$ and a positive constant $A$ such that
			\[
				u(x)\ge\dfrac{A}{|x|^{\sigma}},\quad x\in B_{R}.
			\]
			Thus, since $f$ is nondecreasing and verifies \eqref{eq:f1}, there exist $R>R_0$
			large enough and a positive constant $\tilde{A}$ such that  
			\[
				(-\Delta)^s u(x)+|\nabla u(x)|^q\ge\dfrac{\tilde{A}}{|x|^{\sigma p}},\quad x\in B_{R}\setminus B_{R_0}.
			\]
Then, since $\phi(x)\le u(x)$ for any $x\in\mathbb{R}^N\setminus A(R_1,R_2)$, by Theorem \ref{theorem:cpp} there exists $K>0$ such that 
			\[
				K\phi(x)\le u(x),\quad x\in\mathbb{R}^N.
			\]
			Passing to the limit as $R_2\to\infty,$ we get
			\[
				\frac{C}{|x|^{N-2s}}\le u(x),\quad  |x|>R_1.
			\]
			as wanted
		\end{proof}
		Now we can prove the main result of this subsection, that is, we can give the
		
		\begin{proofcritical}
			Let us define the radial function
			\[
				\Gamma(x)\coloneqq\dfrac{\log(1+|x|)}{|x|^{N-2s}},
			\]
			that is decreasing in $(r_0,\infty)$ for some $r_0>0$. By \cite[Lemma 6.1]{MR2739791}, there exists a positive constant $C$ such that
			\[
				(-\Delta)^s {\Gamma}(x)\le\frac{C}{|x|^{N}},\quad x\in\mathbb{R}^N\setminus\{0\}.
			\]
Given $\min\{r_0,R_0\}<\varepsilon< R_1<R_2,$ we define now
			\[
				\phi(x)\coloneqq  
					\begin{cases}
						m(0,R_1)\frac{w(x)-w(R_2)}{w(\varepsilon)-w(R_2)} &\text{if } |x|\le R_2,\\
						0&\text{if } |x|>R_2,
					\end{cases}
			\]
			where here
			\[
				w(x)\coloneqq
				\begin{cases}
	 				\Gamma(\varepsilon)&\text{if } |x|\le\varepsilon,\\
	 				\Gamma(x)&\text{if } |x|> \varepsilon.
				\end{cases}
			\]
			Since it can be proved (see \cite[page 2734]{MR2739791}) that $(-\Delta)^sw$ also satisfies the same kind of upper bound than $(-\Delta)^{s}\Gamma$ for $x\in A(R_1,R_2)$ then there exists $C>0$, a constant independent of $R_2$, such that,
			\[
				(-\Delta)^s\phi(x)\le\dfrac{m(0,R_1)}{w(\varepsilon)-w(R_2)}\dfrac{C}{|x|^N},\quad x\in A(R_1,R_2).			\]
			Thus if $x\in A(R_1,R_2),$ we get
			\begin{align*}
				(-\Delta)^s\phi(x)&+|\nabla \phi(x)|^q \le
				\dfrac{m(0,R_1)}{w(\varepsilon)-w(R_2)}\dfrac{C}{|x|^N}\\
				&+\left(\dfrac{m(0,R_1)}{w(\varepsilon)-w(R_2)}\right)^q
				\left[
					\dfrac{|x|^{-N+2s}}{1+|x|}+\dfrac{N-2s}{|x|^{N+1-2s}}\log(1+|x|)
				\right]^q\\
				&\le C\left[\dfrac{m(0,R_1)}{w(\varepsilon)-w(R_2)}\dfrac{1}{|x|^N}
				+\left(\dfrac{m(0,R_1)}{w(\varepsilon)-w(R_2)}\right)^q\dfrac{\log(1+|x|)^q}{|x|^{(N+1-2s)q}}
				\right],
			\end{align*}
			where $C$ is a positive constant independent of $R_2.$ Moreover, since  
			\[
				\dfrac{m(0,R_1)}{w(\varepsilon)-w(R_2)}\to\dfrac{m(0,R_1)}{w(\varepsilon)},\quad R_2\to\infty,
			\]
			taking $R_2$ big enough it follows that
			\begin{equation}\label{apple}
				(-\Delta)^s\phi(x)+|\nabla \phi(x)|^q \le
				 C\left[\dfrac{1}{|x|^N}
				+\dfrac{\log(1+|x|)^q}{|x|^{(N+1-2s)q}}
				\right],
			\end{equation}
			for every $x\in A(R_1,R_2)$. We show up that, since $q>\tfrac{N}{N+1-2s},$ it is clear that
			$\log(1+|x|)\leq|x|^{\beta}$ with
			$$\beta=\frac{q(N-2s+1)-N}{q}>0.$$			
			Using that fact, and the hypothesis on $f$, from \eqref{apple} we get that 
			\[
				(-\Delta)^s\phi(x)+|\nabla \phi(x)|^q 
				\le f\left(\dfrac{K}{|x|^N}\right), \quad x\in A(R_1,R_2),			\]
			with $K$ independent of $R_2$ as long as {$R_1$ is large enough.} Using now the Lemma \ref{lemma:aux6} by comparison we conclude the existence of $C>0$ such that
			\begin{equation}\label{eq:desaux1}
				C\phi(x)\le u(x),
			\end{equation}
			for every $x\in A(R_1,R_2).$
			
			On the other hand, by Lemmas \ref{lemma:aux5} and \ref{lemma:AGMQ4}{\it (b)}, 
			for $R$ large enough, there exists a positive
			constant $K$ such that
			\begin{equation}\label{eq:desaux2}
				u(x)\le \dfrac{K}{|x|^{N-2s}},\quad x\in B_{R}.
			\end{equation}
Finally, by \eqref{eq:desaux1} and \eqref{eq:desaux2}, we get  
			\[
				C\dfrac{\log(1+|x|)}{|x|^{N-2s}}\le u(x)\le \dfrac{K}{|x|^{N-2s}}, \quad x\in A(R_1,R_2),
			\]
			for  $R_1$ large enough, where $C$ and $K$ are positive constants independent of $R_2.$ Therefore 
			\[
				C\dfrac{\log(1+|x|)}{|x|^{N-2s}}
				\le u(x)\le \dfrac{K}{|x|^{N-2}}, \quad x\in \mathbb{R}^N\setminus B_{R_1},
			\]
			that clearly implies a contradiction.  \hfill$\square$
		\end{proofcritical}
\section{Existence of supersolutions}\label{exsol}
    Unlike to the previous section, positive supersolutions can be constructed when we consider $f(u)=\lambda u^p$ in \eqref{main} for some values of $\lambda$. In fact we give the

    \begin{proofexistencia}
    	Let $\varphi_{\sigma}=\varphi\in \mathcal{C}^2(\mathbb{R}^N)$ be a positive real function that is radially 
    	symmetric and decreasing for every $|x|>1,$ such that $\varphi(x)\le |x|^{-\sigma},$ 
    	$|D \varphi(x)|\ge c_0 |x|^{-\sigma-1}$ and 
    	$|D^2\varphi(x)|<\tilde{c_0}|x|^{-\sigma-2}$ 
    	for some $\sigma>0$ and for $|x|$ large enough (see Lemma \ref{lema.BV}).
    	We define
    	\[
    		\psi(x,R,\sigma):=\varphi\left(\dfrac{R}{R_0}x\right).
    	\]
    	Then, by Lemma \ref{lema.BV}, there exists $\tilde{R}>0$ such that   
    	\begin{equation}\label{eq.aux.oc}
    		\begin{aligned}
    			(-\Delta)^s\psi(x,R,\sigma)
    			&=\left(\dfrac{R}{R_0}\right)^{2s}(-\Delta)^s\varphi\left(\dfrac{R}{R_0}x\right)\\
    			&\ge
    			\begin{cases}
					-\left(\dfrac{R}{R_0}\right)^{-\sigma}\dfrac{c_1}{|x|^{\sigma+2s}} 
					&\text{if }\sigma<N,\\[8pt]
					-\left(\dfrac{R}{R_0}\right)^{-N}\dfrac{c_2\log(|x|)}{|x|^{N+2s}}&\text{if }\sigma=N,\\[8pt]
					-\left(\dfrac{R}{R_0}\right)^{-N}\dfrac{c_3}{|x|^{N+2s}}&\text{if }\sigma>N,
				\end{cases}
    		\end{aligned}
    	\end{equation} 
    	for all $R>\tilde{R}$ and $|x|>R_0.$ 
    	
    	We now split the rest of the proof in six different cases.
    	
    	\noindent{\it Case 1.} $p=\dfrac{N}{N-2s},$  $0<q<1.$ 
    		
    		Taking $\sigma=N-2s,$ by \eqref{eq.aux.oc}, for any $R>\tilde{R}$ and $|x|>R_0,$ we get
    			\begin{align}\label{alboran}
    				(-\Delta)^s&\psi(x,R,N-2s)+|\nabla \psi(x,R,N-2s)|^q\nonumber\\
    				&\ge\left(\dfrac{R}{R_0}\right)^{(2s-N)q}\dfrac{1}{|x|^{N}}
    				\left[
    					-\left(\dfrac{R}{R_0}\right)^{(2s-N)(1-q)}c_1+c_0^q
    					{|x|^{N-(N+1-2s)q}}
    				\right]\nonumber\\
    				&\ge\left(\dfrac{R}{R_0}\right)^{(2s-N)q}\dfrac{1}{|x|^{N}}
    				\left[
    					-\left(\dfrac{R}{R_0}\right)^{(2s-N)(1-q)}c_1+
    					c_0^q{R_0^{N-(N+1-2s)q}}
    				\right].
    			\end{align}
			Since $1-q>0$ it is possible to take $R>\widetilde{R}$ big enough such that
    			\begin{align*}
    				(-\Delta)^s\psi&(x,R,N-2s)+|\nabla \psi(x,R,N-2s)|^q\\
    				&\ge\left(\dfrac{R}{R_0}\right)^{2sq+N(1-q)}\psi^p(x,R,N-2s)
    				\dfrac{c_0^q{R_0^{N-(N+1-2s)q}}}2\\
    				&\ge\lambda\psi^p(x,R,N-2s),
    			\end{align*}
    	for every $\lambda>0,$ and $|x|>R_0$.

	    	\bigskip
    	
    	\noindent{\it Case 2.} $p=\dfrac{N}{N-2s},$  and $q=1.$ 
   			
			Doing the same as {\it Case 1} for $|x|>R_0$   we get		
			\begin{align*}
    				(-\Delta)^s&\psi(x,R,N-2s)+|\nabla \psi(x,R,N-2s)|\\
    				&\ge\left(\dfrac{R}{R_0}\right)^{2s-N}\dfrac{1}{|x|^{N}}
    				\left[
    					-c_1+
    					c_0^q{|x|^{2s-1}}
    				\right]\\
    				&\ge\left(\dfrac{R}{R_0}\right)^{2s}\psi^p(x,R,N-2s)
    				\left[
    					-c_1+
    					c_0{R_0^{2s-1}}
    				\right].
    			\end{align*}
    			As long as $R_0>\tilde{R}_0$ such that 
    			\[	
    				-c_1+c_0{R_0^{2s-1}}>1,
    			\]
    			then it is possible to consider $R>\bar{R}$ big enough in order to have that and 
  
    			\[
    				(-\Delta)^s\psi(x,R,N-2s)+|\nabla \psi(x,R,N-2s)|
    				\ge\lambda\psi^p(x,R,N-2s),
    			\]
    		for every $\lambda>0$ and $|x|>R_0$.
    	\bigskip
    	
    	\noindent{\it Cases 3.} $p=\dfrac{N}{N-2s},$ and $1<q\le\dfrac{N}{N+1-2s}.$ 
    	
    		We will consider now $u(x)=A\psi(x,R,N-2s)$ where $A$ is a positive constant that 
    		will be chosen in a suitable way later. Repeating the computations done in \eqref{alboran} we get that
    		\begin{align*}
    				&(-\Delta)^su(x)+|\nabla u(x)|^q\\
    				&\ge\left(\dfrac{R}{R_0}\right)^{2s-N}\dfrac{A}{|x|^{N}}
    				\left[
    					-c_1+A^{q-1}c_0^q \left(\frac{R}{R_0}\right)^{(2s-N)(q-1)}
    				\right]\\
    				&\ge\left(\dfrac{R}{R_0}\right)^{2s}A^{1-p}u^p(x)
    				\left[
    					-c_1+A^{q-1}c_0^q\left(\frac{R}{R_0}\right)^{(2s-N)(q-1)}
    				\right],
    			\end{align*}
			as long as  $R>\tilde{R}$ and $|x|>R_0$.  Now we choose the positive constant $A$ so that, for instance,
    			\[
    				-c_1+A^{q-1}c_0^q \left(\frac{R}{R_0}\right)^{(2s-N)(q-1)}=1.
    			\]
    			Therefore, taking
    			\[
    				\lambda_0=\left(\dfrac{R}{R_0}\right)^{2s}A^{1-p},
    			\] 
    			we have that for any $0<\lambda<\lambda_0,$ 
    			 and $|x|>R_0,$ $u(x)$ is the suitable supersolution we were looking for.

    	\bigskip
    	
    	\noindent{\it Case 4.} $\tfrac{2s-1}{2s-q}q\le p<\tfrac{N}{N-2s},$  and 
       		$1<\tfrac{N+2s}{N+1}<q<\tfrac{N}{N+1-2s}.$
       		
       		We take $0<\sigma=\tfrac{2s-q}{q-1}$ and  $u(x)=A\psi(x,R,\sigma)$ 
       		where $A$ is a positive constant that 
    		will be chosen later. We observe that since $\tfrac{N+2s}{N+1}<q,$ then $\sigma<N$ so that, since $\sigma+2s=q(\sigma+1)$ by \eqref{eq.aux.oc} we get       		
    			\begin{align*}
    				(-\Delta)^s&u(x)+|\nabla u(x)|^q\\
    				&\ge\left(\dfrac{R}{R_0}\right)^{-\sigma}\dfrac{1}{|x|^{(\sigma+1)q}}A
    				\left[
    					-c_1+
    					c_0^{q}A^{q-1}\left(\dfrac{R_0}{R}\right)^{2s-q}
    				\right]\\
    				&\ge A^{1-p}\left(\dfrac{R}{R_0}\right)^{2s}u^{\frac{\sigma+1}{\sigma}q}(x)
    				\left[
    					-c_1+
    					c_0^{q}A^{q-1}\left(\dfrac{R_0}{R}\right)^{2s-q}
    				\right].\\
    			\end{align*}
			for every $R>\tilde{R}$ and $|x|>R_0$. Choosing now $A$ so that
    			\[
    				-c_1+
    					c_0^{q}A^{q-1}\left(\dfrac{R_0}{R}\right)^{2s-q}=1,
    			\]
    			we obtain 
    			\[
    				\lambda_0=\left(\dfrac{R}{R_0}\right)^{2s}A^{1-p}\simeq\left(\frac{R}{R_0}\right)^{\frac{2s-q}{q-1}(1-p)+2s},
    			\] 
    			Therefore, for every $0<\lambda<\lambda_0$ and $|x|>R_0$, we get
    			\[
    				(-\Delta)^s u(x)+|\nabla u(x)|^q\ge \lambda 
    				u^p(x),
    			\]
			as wanted
    	\bigskip
    	
    	\noindent{\it Case 5.} $\tfrac{N+2s}{N+2s-q}q\le p<\tfrac{N}{N-2s},$ and
       		$0<q<\tfrac{N+2s}{N+1},$ $q\neq1.$ 
       		
       		We consider now $u(x)=A\psi(x,R,\sigma)$ with $\sigma=\tfrac{N+2s-q}{q}$
       		and $A>0$ to be chosen. Since $0<q<\tfrac{N+2s}{N+1}$  then $\sigma >N$ so that using again 	\eqref{eq.aux.oc} it follows that
    			\begin{align*}
    				(-\Delta)^s&u(x)+|\nabla u(x)|^q\\
    				&\ge\left(\dfrac{R}{R_0}\right)^{-N}\dfrac{A}{|x|^{(\sigma+1)q}}
    				\left[
    					-c_3+
    					c_0^{q}A^{q-1}\left(\dfrac{R_0}{R}\right)^{2s-q}
    				\right]\\
    				&\ge\left(\dfrac{R}{R_0}\right)^{2s}
    				A\psi^{\frac{\sigma+1}{\sigma}q}(x,R,\sigma)
    				\left[
    					-c_3+
    					c_0^{q}A^{q-1}\left(\dfrac{R_0}{R}\right)^{2s-q}
    				\right].\\
    			\end{align*}
			for $R>\tilde{R}$ and $|x|>R_0$   by using the fact that $(\sigma+1)q=N+2s$. Choosing the positive constant $A$ so that
    			\[
    				-c_3+
    					c_0^{q}A^{q-1}\left(\dfrac{R_0}{R}\right)^{2s-q}=1,
    			\]
    			and taking
    			\[
    				\lambda_0=\left(\dfrac{R}{R_0}\right)^{2s}A^{1-p},
    			\] 
    			we have 
    			\[
    				(-\Delta)^s u(x)+|\nabla u(x)|^q\ge \lambda 
    				u^p(x),
    			\]
    		for every $0<\lambda<\lambda_0,$ and $|x|>R_0.$       	
		\bigskip
       	
       	  In the case of $q=1$ we will take $\sigma={N+2s-1}$ obtaining
       	
    			\begin{align*}
    				(-\Delta)^s&\psi(x,R,\sigma)+|\nabla \psi(x,R,\sigma)|\\
    				&\ge\left(\dfrac{R}{R_0}\right)^{-N}\dfrac{1}{|x|^{N+2s}}
    				\left[
    					-c_3+
    					c_0\left(\dfrac{R_0}{R}\right)^{2s-1}
    				\right]\\
    				&\ge\left(\dfrac{R}{R_0}\right)^{2s}
    				\psi^{\frac{N+2s}{N+2s-1}}(x,R)
    				\left[
    					-c_3+
    					c_0\left(\dfrac{R_0}{R}\right)^{2s-1}
    				\right],\\
    			\end{align*}
			for $R>\tilde{R}$ and $|x|>R_0$.  Taking $R_0>\widetilde{R}_{0}$ big enough such that
    			\[
    				-c_3+
    					c_0\left(\dfrac{R_0}{R}\right)^{2s-1}\ge 1,
    			\]
    			and
    			\[
    				\lambda_0=\left(\dfrac{R}{R_0}\right)^{2s},
    			\] 
    			we conclude that $u(x)$ is a supersolution of \eqref{eq:cc1} for every $0<\lambda<\lambda_{0}.$

    	\noindent{\it Case 6.} $\tfrac{N+2s}{N}< p<\tfrac{N}{N-2s},$ 
       		$q=\tfrac{N+2s}{N+1}.$
       		
		In this last case we observe that
       		$$\mbox{$\sigma+2s>(\sigma+1)q$ if and only if $\sigma<N$},$$ 
		and 
       		$$\tfrac{\sigma+1}\sigma q\to \tfrac{N+2s}N\quad \text{as } \sigma\to N.$$ 
		Then given $p>\tfrac{N+2s}{N}$ there exists 
       		$\sigma_p\in(0,N)$ such that $\tfrac{\sigma_p+1}{\sigma_p}q<p.$ Considering $u(x)=A\psi(x,R,\sigma_p)$, with $A$ chosen later, for every $R>\tilde{R}$ and $|x|>R_0,$ we get
    			\begin{align*}
    				&(-\Delta)^su(x)+|\nabla u(x)|^q\\
    				&\ge\left(\dfrac{R}{R_0}\right)^{-\sigma_p}\dfrac{A}{|x|^{(\sigma_p+1)q}}
    				\left[
    					-\dfrac{c_1}{|x|^{\sigma_p+2s-(\sigma_p+1)q}}+
    					c_0^{q}\left(\dfrac{R_0}{R}\right)^{-\sigma_p (q-1)} A^{q-1}
    				\right]\\
    				&\ge\left(\dfrac{R}{R_0}\right)^{(\sigma_p+1)q-\sigma_p}\!\!
    				A\psi^{\frac{\sigma_p+1}{\sigma_p}q}(x,R,\sigma_p)
    				\left[
    					-\dfrac{c_1}{R_0^{\sigma_p+2s-(\sigma_p+1)q}}+
    					c_0^{q}\left(\dfrac{R_0}{R}\right)^{\sigma_p(q-1)}A^{q-1}
    				\right].
    			\end{align*}
      			Taking $A>0$ in order to guarantee that
    			\[
    				-\dfrac{c_1}{R_0^{\sigma_p+2s-(\sigma_p+1)q}}+
    					c_0^{q}\left(\dfrac{R_0}{R}\right)^{\sigma_p(q-1)}A^{q-1}=1,
    			\]
    			and
    			\[
    				\lambda_0=\left(\dfrac{R}{R_0}\right)^{(\sigma_p+1)q-\sigma_p}A^{1-p},
    			\] 
			the conclusion follows.
    		
    \end{proofexistencia}

\medskip
   
    {\bf Acknowledgements.}
B. B. was partially supported by AEI grant MTM2016-80474-P and Ram\'on y Cajal fellowship RYC2018-026098-I (Spain). L. DP. was partially supported by the European Union's Horizon 2020 research and innovation 
program under the Marie Sklodowska-Curie grant agreement No 777822.

\bibliographystyle{abbrv}
\bibliography{Bibliografia}

\end{document}